\documentclass[a4paper, 11pt]{article}

\usepackage{fullpage}

\usepackage{amsmath}
\usepackage{amsthm}
\usepackage{amsfonts}
\usepackage{tikz-cd}
\usepackage[utf8x]{inputenc}
\usepackage{enumerate}
\usepackage[english]{babel}
\usepackage{amssymb}
\usepackage{esint}
\usepackage{hyperref}
\usepackage{bm}
\usepackage{bbm}

\theoremstyle{plain}
\newtheorem{theorem}{Theorem}[section]
\newtheorem{lemma}[theorem]{Lemma}

\newtheorem{proposition}[theorem]{Proposition}
\theoremstyle{definition}
\newtheorem{definition}[theorem]{Definition}
\theoremstyle{remark}

\newtheorem{remark}[theorem]{Remark}

\DeclareMathOperator{\divv}{div}

\title{Semiflow selection \\for the compressible Navier--Stokes system}
\author{Danica Basari\'{c}}
\date{}

\begin{document}
	
	\maketitle
	
	\begin{center}
		Technische Universit\"{a}t Berlin \\
		Institute f\"{u}r Mathematik, Stra{\ss}e des 17. Juni 136, 10623 Berlin, Germany 
	\end{center}
	
	\begin{center}
		E-mail address: basaric@math.tu-berlin.de
	\end{center}

	\begin{abstract}
		Although the existence of dissipative weak solutions for the compressible Navier--Stokes system has already been established for any finite energy initial data, uniqueness is still an open problem. The idea is then to select a solution satisfying the \textit{semigroup property}, an important feature of systems with uniqueness. More precisely, we are going to prove the existence of a \textit{semiflow selection} in terms of the three state
		variables: the density, the momentum and the energy. Finally, we will show that it is
		possible to introduce a new selection defined only in terms of the initial density and
		momentum; however, the price to pay is that the semigroup property will hold almost
		everywhere in time.
	\end{abstract}
	
	\section{Introduction}
	
	Consider the compressible Navier--Stokes system
	\begin{equation} \label{continuity equation}
		\partial_t \varrho + \divv_x (\varrho \textbf{u}) =0,
	\end{equation}
	\begin{equation} \label{balance of momentum}
		\partial_t(\varrho \textbf{u}) + \divv_x(\varrho \textbf{u}\otimes \textbf{u})+ \nabla_x p(\varrho) = \divv_x \mathbb{S}(\nabla_x \textbf{u}),
	\end{equation}
	where $\varrho=\varrho(t,x)$ denotes the density, $\textbf{u}=\textbf{u}(t,x)$ the velocity, $p=p(\varrho)$ the pressure and $\mathbb{S}=\mathbb{S}(\nabla_x \textbf{u})$ the viscous stress. We will consider the system on the set $(t,x)\in (0,\infty)\times \Omega$, where $\Omega\subset \mathbb{R}^N$, $N=2,3$ is a bounded domain with $\partial \Omega$ of class $C^{2+\nu}$ for a certain $\nu>0$. As our goal is to handle a potentially ill--posed problem, we have deliberately omitted the case $N=1$, for which the problem is known to be be well posed, see Kazhikhov \cite{Kha}.
	
	We impose the no--slip boundary condition for the velocity
	\begin{equation}
		\textbf{u}|_{\partial \Omega}=0 \quad \mbox{for all } t\in [0,\infty),
	\end{equation}
	and we prescribe the initial conditions
	\begin{equation}
		\varrho(0,\cdot)=\varrho_0, \quad (\varrho\textbf{u})(0,\cdot)=(\varrho\textbf{u})_0.
	\end{equation}
	Finally, we assume a barotropic pressure $p\in C[0,\infty) \cap C^1(0,\infty)$ such that $p(0)=0$ and
	\begin{equation} \label{pressure}
		\begin{cases}
			p'(\varrho) \geq a_1 \varrho^{\gamma-1} -b &\mbox{for all } \varrho>0 \\
			p(\varrho) \leq a_2 \varrho^{\gamma} +b &\mbox{for all } \varrho\geq 0
		\end{cases}
	\end{equation}
	for certain constants $a_1>0$, $a_2$ and $b$, with $\gamma > \frac{N}{2}$ the adiabatic exponent, and the viscous stress tensor to be a linear function of the velocity gradient, more specifically to satisfy the Newton's rheological law
	\begin{equation} \label{viscous stress tensor}
		\mathbb{S}(\nabla_x \textbf{u})= \mu \left(\nabla_x \textbf{u}+\nabla_x^T \textbf{u}- \frac{2}{N}(\divv_x \textbf{u})\mathbb{I}\right) + \lambda(\divv_x \textbf{u})\mathbb{I},
	\end{equation}
	with $\mu >0$ and $\lambda\geq 0$. We would like to point out that \eqref{pressure} allows the pressure to a general 
	non--monotone function of the density. Still, as we shall see below, the problem admits global--in--time weak solutions and retains other fundamental properties of the system, notable the weak--strong uniqueness, see \cite{Cha}.
	
	We will consider dissipative weak solutions, i.e. solutions satisfying equations \eqref{continuity equation} and \eqref{balance of momentum} in a distributional sense along with the energy inequality, see Section \ref{dissipative solutions} below. Although the existence of global in time solutions has already been established for any finite energy initial data, see e.g. \cite{Lio} and \cite{Fei}, uniqueness is still an open task. Then, a natural question is whether it is possible or not to select a solution satisfying at least the \textit{semiflow} property, an important feature of systems with uniqueness:  letting the system run from time $0$ to time $s$ and then restarting and letting it run from time $s$ to time $t$ gives the same outcome as letting it run directly from time $0$ to time $t$.
	
	The result presented in this manuscript can be seen as the deterministic version of the stochastic paper done by Breit, Feireisl and Hofmanov\'{a} \cite{BreFeiHofm}. The construction of the semigroup  arises from the theory of Markov selection in order to study the well--posedness of certain systems; it was first developed by Krylov \cite{Kry} and later adapted by Flandoli and Romito \cite{FlaRom}, Cardona and Kapitanski \cite{CarKap} in the context of the incompressible Navier--Stokes system.
	
	Breit, Feireisl and Hofmanov\'{a} \cite{BreFeiHof} used the deterministic version motivated by \cite{CarKap} to show the existence of the semiflow selection for dissipative measure--valued solutions of the isentropic Euler system. Following the same strategy, we will establish the existence of a \textit{semiflow selection} for the compressible Navier--Stokes system \eqref{continuity equation}--\eqref{viscous stress tensor}. Specifically, introducing the momentum $\textbf{m}=\varrho \textbf{u}$, we show the existence of a measurable mapping
	\begin{equation*}
		V: [t, \varrho_0, \textbf{m}_0] \mapsto [\varrho(t), \textbf{m}(t)], \quad t\geq 0,
	\end{equation*}
	satisfying the semigroup property:
	\begin{equation} \label{semigroup property for almost every time}
		V[t_1+t_2, \varrho_0, \textbf{m}_0] = V \left[t_2, V[t_1, \varrho_0, \textbf{m}_0] \right] \quad \mbox{for a.e. } t_1,t_2 \geq 0,
	\end{equation}
	where $[\varrho,\textbf{m}= \varrho \textbf{u}]$ represents a dissipative weak solution to \eqref{continuity equation}--\eqref{viscous stress tensor}. At this stage, we would like to point out the main essential difference between the present paper and \cite{BreFeiHof}. The semigroup constructed for the Euler system in \cite{BreFeiHof} contains the total energy as one of the state variables. This may be seen as a kind of drawback as the energy should be determined in terms of the basic state variables $[\varrho, \textbf{m}]$. This is however a delicate issue for the Euler flow as the energy contains also the defect due to possible concentrations and/or oscillations. Such a problem does not occur for the Navier–Stokes system, where the energy is indeed a function of $[\varrho, \textbf{u}]$ at least for a.a. $t \in [0, \infty)$, cf. \eqref{semigroup property for almost every time}. 
	
	The paper is organized as follows. The remaining part of this section contains the definitions of a dissipative weak solution and admissibility. In Section \ref{set-up} we fix the topologies on the space of the initial data and the trajectory space, and we introduce the concept of a semiflow selection in terms of the three state variables: the density $\varrho_0$, the momentum $\textbf{m}_0$, and the energy $E_0$. In Section \ref{properties of U} we analyze the properties (compactness, non--emptiness, the \textit{shift invariance} and \textit{continuation} properties) of the solution set for a given initial data  while Section \ref{Semiflow selection} is devoted to the proof of the existence of a semiflow selection. Finally, in Section \ref{restriction} we study a new selection defined only in terms of the initial density $\varrho_0$ and the momentum $\textbf{m}_0$.

	\subsection{Dissipative weak solution} \label{dissipative solutions}
	Following \cite{Fei}, we can give the definition of a dissipative solution to the compressible Navier--Stokes system.
	
	\begin{definition} \label{dissipative solution}
		The pair of functions $\varrho$, $\textbf{u}$ is called \textit{dissipative weak solution} of the Navier--Stokes system \eqref{continuity equation}--\eqref{viscous stress tensor} with the total energy $E$ and initial data
		\begin{equation*}
			[\varrho_0, (\varrho \textbf{u})_0, E_0] \in L^{\gamma}(\Omega) \times L^{\frac{2\gamma}{\gamma+1}}(\Omega; \mathbb{R}^N) \times [0,\infty)
		\end{equation*}
		if the following holds:
		\begin{itemize}
			\item[(i)] \textit{regularity class}: 
			\begin{equation*}
				[\varrho, \varrho \textbf{u}, E] \in C_{weak,loc}([0,\infty); L^{\gamma}(\Omega)) \times C_{weak,loc}([0,\infty); L^{\frac{2\gamma}{\gamma+1}}(\Omega; \mathbb{R}^N))\times BV_{loc}([0,\infty)),
			\end{equation*}
			with $\varrho \geq 0$;
			\item[(ii)] \textit{weak formulation of the renormalized continuity equation}: for any $\tau>0$ and any functions
			\begin{equation*}
				B\in C[0,\infty) \cap C^1(0,\infty), \ b\in C[0,\infty) \mbox{ bounded on } [0,\infty),
			\end{equation*}
			\begin{equation*}
				B(0)=b(0)=0 \quad \mbox{and} \quad b(z)=zB'(z)-B(z) \mbox{ for any }z>0,
			\end{equation*}
			the integral identity
			\begin{equation} \label{weak formulation renormalized continuity equation}
				\left[ \int_{\Omega} B(\varrho) \varphi(t,\cdot) dx\right]_{t=0}^{t=\tau} = \int_{0}^{\tau} \int_{\Omega} [B(\varrho) \partial_t\varphi+ B(\varrho)\textbf{u}\cdot \nabla_x\varphi + b(\varrho)\divv_x \textbf{u} \varphi] dxdt,
			\end{equation}
			holds for any $\varphi \in C_c^1([0,\infty)\times \Omega)$, where $\varrho(0,\cdot)=\varrho_0$;
			\item[(iii)] \textit{weak formulation of the balance of momentum}: for any $\tau>0$ the integral identity
			\begin{equation} \label{weak formulation of the balance of momentum}
				\left[ \int_{\Omega} \varrho \textbf{u} \cdot \bm{\varphi}(t,\cdot) dx\right]_{t=0}^{t=\tau} = \int_{0}^{\tau} \int_{\Omega} [\varrho \textbf{u}\cdot \partial_t\bm{\varphi}+ (\varrho \textbf{u} \otimes \textbf{u}): \nabla_x \bm{\varphi}+ p(\varrho)\divv_x \bm{\varphi}- \mathbb{S}(\nabla_x \textbf{u}): \nabla_x \bm{\varphi}] dxdt,
			\end{equation}
			holds for any $\bm{\varphi} \in C_c^1([0,\infty)\times \Omega; \mathbb{R}^N)$, where $(\varrho \textbf{u})(0,\cdot)= (\varrho \textbf{u})_0$;
			\item[(iv)] \textit{energy inequality}: for a.e. $\tau\geq0$ we have 
			\begin{equation} \label{energy}
				E(\tau) = \int_{\Omega} \left[\frac{1}{2} \varrho |\textbf{u}|^2 + P(\varrho) \right](\tau, \cdot) dx,
			\end{equation}
			where the \textit{pressure potential} $P$ is chosen as a solution of 
			\begin{equation*}
				\varrho P'(\varrho)-P(\varrho)=p(\varrho);
			\end{equation*}
			we also require $E=E(\tau)$ to be a non-increasing function of $\tau$:
			\begin{equation} \label{energy inequality}
				[E\psi]_{t=\tau_1-}^{t=\tau_2+}-\int_{\tau_1}^{\tau_2} E(t)\psi'(t) dt + \int_{\tau_1}^{\tau_2} \psi \int_{\Omega} \mathbb{S}(\nabla_x \textbf{u}): \nabla_x\textbf{u} dxdt\leq 0,
			\end{equation}
			for any $0\leq \tau_1 \leq \tau_2$, $\psi \in C_c^1[0,\infty), \ \psi\geq0$, where $E(0-)=E_0$.
		\end{itemize}
	\end{definition}

	\begin{remark}
		Condition (ii) can be considered as a simple rescaling of the state variables in the continuity equation \eqref{continuity equation}; it is necessary in order to prove the weak sequential stability and the existence of dissipative weak solutions. In particular, choosing $B(z)=z$ we get the standard weak formulation of the continuity equation.
	\end{remark}
	\begin{remark}
		At this stage, similarly to \cite{BreFeiHof}, the total energy is considered as an additional phase variable - a non--increasing function of time possessing one sided limits at any time. In contrast with \cite{BreFeiHof}, the energy can be determined in terms of $\varrho$ and $\textbf{u}$, see \eqref{energy}, with the exception of a zero measure set of times.
	\end{remark}
	\begin{remark}
		The condition $E(0-) = E_0$ comes naturally from the assumption that the total energy is bounded at the initial time $t=0$, specifically
		\begin{equation*}
			E(0+) \leq E_0.
		\end{equation*}
		
	\end{remark}

	\subsection{Admissible solution}
	From now on, it is more convenient to work with the momentum $\textbf{m}=\varrho \textbf{u}$. Following \cite{BreFeiHof}, for a fixed initial data, we focus on a subclass of dissipative weak solutions consisting of the ones which \textit{minimize} the total energy. At the present state, we retain the total energy $E$ as an integral part of the solution so we work with the triples $[\varrho, \textbf{m},E]$. Finally, in Section \ref{restriction} we pass to the natural state variables $[\varrho, \textbf{m}]$. We introduce the relation
	\begin{equation*}
		[\varrho^1, \textbf{m}^1, E^1] \prec [\varrho^2, \textbf{m}^2, E^2] \ \Leftrightarrow \ E^1(\tau \pm) \leq E^2(\tau \pm) \mbox{ for any } \tau \in (0,\infty).
	\end{equation*}
	where $[\varrho^i, \textbf{m}^i, E^i]$, $i=1,2$ are two dissipative weak solutions sharing the same initial data $[\varrho_0, \textbf{m}_0, E_0]$.
	\begin{definition} \label{admissible solution}
		A dissipative weak solution $[\varrho, \textbf{m}, E]$ starting from the initial data $[\varrho_0, \textbf{m}_0, E_0]$ is said \textit{admissible} if it is minimal with respect to the relation $\prec$. More precisely, if  $[\tilde{\varrho}, \tilde{\textbf{m}}, \tilde{E}]$ is another dissipative solution starting from $[\varrho_0, \textbf{m}_0, E_0]$ and
		\begin{equation*}
			[\tilde{\varrho}, \tilde{\textbf{m}}, \tilde{E}] \prec [\varrho, \textbf{m}, E],
		\end{equation*}
		then
		\begin{equation*}
			E=\tilde{E}\ \mbox{ in }[0,\infty).
		\end{equation*}
	\end{definition}

	In particular, such selection criterion guarantees that equilibrium states belong to the class of dissipative weak solutions (see \cite{BreFeiHof}, Section 6.3).

	\section{Set--up} \label{set-up}
	
	First of all, we must choose suitable topologies on the space of the initial data and the space of dissipative weak solutions. For simplicity, we will consider the Hilbert space
	\begin{equation*}
		X= W^{-\ell,2}(\Omega) \times W^{-\ell,2}(\Omega; \mathbb{R}^N) \times \mathbb{R},
	\end{equation*}
	where the constant $\ell>\frac{N}{2}+1$ is fixed, along with its subset containing the initial data
	\begin{equation*}
		D= \left\{ [\varrho_0,\textbf{m}_0, E_0]\in X: \  \varrho_0 \in L^1(\Omega), \ \varrho_0\geq 0, \ \textbf{m}_0 \in L^1(\Omega; \mathbb{R}^N), \ \int_{\Omega}\left[\frac{1}{2}\frac{|\textbf{m}_0|^2}{\varrho_0}+ P(\varrho_0)\right]dx \leq E_0 \right\}.
	\end{equation*}
	Notice that the convex function $[\varrho,\textbf{m}] \mapsto \frac{|\textbf{m}|^2}{\varrho}$ is defined for $\varrho \geq 0$, $\textbf{m}\in \mathbb{R}^N$ as
	\begin{equation*}
		\frac{|\textbf{m}|^2}{\varrho}= \begin{cases}
			0 &\mbox{if }\textbf{m}=0, \\
			\frac{|\textbf{m}|^2}{\varrho} &\mbox{if }\varrho>0, \\
			\infty &\mbox{otherwise}.
		\end{cases}
	\end{equation*}
	If $[\varrho_0, \textbf{m}_0, E_0]\in D$, applying H\"{o}lder inequality, we can also deduce that $\varrho_0 \in L^{\gamma}(\Omega)$ and $\textbf{m}_0\in L^{\frac{2\gamma}{\gamma+1}}(\Omega; \mathbb{R}^N)$; accordingly, the set of the data can be seen as a closed convex subset of the Banach space $L^{\gamma}(\Omega) \times L^{\frac{2\gamma}{\gamma+1}}(\Omega; \mathbb{R}^N) \times \mathbb{R}$. Indeed, we can write
	\begin{equation*}
		D= \left\{ [\varrho_0,\textbf{m}_0, E_0]\in L^1_{+}(\Omega) \times L^1(\Omega; \mathbb{R}^N) \times \mathbb{R}: \  f([\varrho_0, \textbf{m}_0]) \leq E_0 \right\},
	\end{equation*}
	so that it coincides with the epigraph of the function $f:L^1_{+}(\Omega) \times L^1(\Omega; \mathbb{R}^N) \rightarrow [0,+\infty]$ such that
	\begin{equation*}
		f([\varrho_0, \textbf{m}_0]) = \int_{\Omega}\left[\frac{1}{2}\frac{|\textbf{m}_0|^2}{\varrho_0}+ P(\varrho_0)\right]dx.
	\end{equation*}
	Since $f$ is lower semi-continuous and convex, we obtain that its epigraph is closed and convex.
	
	As trajectory space, we will consider the separable space
	\begin{equation*}
		Q= C_{loc}([0,\infty); W^{-\ell,2}(\Omega)) \times C_{loc}([0,\infty); W^{-\ell,2}(\Omega; \mathbb{R}^N)) \times L^1_{loc}[0,\infty).
	\end{equation*}
	The choice of such topologies is justified by the fact that we want any dissipative weak solution $[\varrho, \textbf{m}, E]$, as defined in Definition \ref{dissipative solution}, to belong to the class $Q$ (since $\ell>\frac{N}{2}$, the $L^p$--space with $p\geq 1$ is compactly embedded in $W^{-\ell,2}$ so in particular it holds for $p=\gamma$ and $p=\frac{2\gamma}{\gamma+1}$ while equations \eqref{weak formulation renormalized continuity equation} and \eqref{weak formulation of the balance of momentum} give an information on the time regularity of the density and the momentum) but also to the set $D$ (this easily follows from the energy inequality) when evaluated at any time $t\geq 0$ in order to have the possibility to restart the system at a random time $t$. 
	
	Finally, for a fixed initial data $[\varrho_0, \textbf{m}_0, E_0]\in D$, we introduce the solution set
	\begin{align*}
		\mathcal{U}&[\varrho_0, \textbf{m}_0, E_0] \\
		&= \left\{ [\varrho, \textbf{m}, E] \in Q \ \big| \ [\varrho,\textbf{m}=\varrho \textbf{u}, E] \mbox{ is a dissipative  weak solution with initial data } [\varrho_0, \textbf{m}_0, E_0]\right\}.
	\end{align*} 
	
	\subsection{Semiflow selection -- main result}
	We can now define a semiflow selection to \eqref{continuity equation}--\eqref{viscous stress tensor}.
	
	\begin{definition} \label{semiflow selection}
		A \textit{semiflow selection} in the class of dissipative weak solutions for the compressible Navier--Stokes system \eqref{continuity equation}--\eqref{viscous stress tensor} is a mapping
		\begin{equation*}
			U: D \rightarrow Q, \quad U\{ \varrho_0, \textbf{m}_0, E_0 \} \in \mathcal{U}[\varrho_0, \textbf{m}_0, E_0] \mbox{ for any } [\varrho_0, \textbf{m}_0, E_0]\in D
		\end{equation*}
		enjoying the following properties:
		\begin{itemize}
			\item[(i)] \textit{Measurability.} The mapping $U: D \rightarrow Q$ is Borel measurable.
			\item[(ii)] \textit{Semigroup property.} We have
			\begin{equation*}
				U\{ \varrho_0,\textbf{m}_0, E_0 \}(t_1+t_2) = U \{ \varrho(t_1), \textbf{m}(t_1), E(t_1-) \}(t_2),
			\end{equation*}
			where $[\varrho, \textbf{m}, E]= U\{ \varrho_0, \textbf{m}_0, E_0 \}$ for any $[\varrho_0, \textbf{m}_0, E_0]\in D$ and any $t_1, t_2 \geq 0$.
		\end{itemize}
	\end{definition}
	
	We are now ready to state our main result; the proof is postponed to Section \ref{selection sequence}.
	\begin{theorem} \label{main result}
		The compressible Navier--Stokes system \eqref{continuity equation}--\eqref{viscous stress tensor} admits a semiflow selection $U$ in the class of dissipative weak solutions in the sense of Definition \ref{semiflow selection}. Moreover, we have that $U\{ \varrho_0, \textbf{m}_0, E_0 \}$ is admissible in the sense of Definition \ref{admissible solution}, for any $[\varrho_0,\textbf{m}_0, E_0]\in D$.
	\end{theorem}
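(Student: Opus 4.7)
\emph{Strategy.} My plan is to follow the Krylov--Flandoli--Romito selection scheme from \cite{FlaRom}, in the deterministic dissipative form used by \cite{BreFeiHof}. The entire argument rests on four structural properties of the solution multi--map $[\varrho_0,\mathbf{m}_0,E_0]\mapsto\mathcal{U}[\varrho_0,\mathbf{m}_0,E_0]$ that Section \ref{properties of U} is designed to establish: non--emptiness (global existence, \cite{Fei}); compactness in the topology of $Q$; shift invariance, i.e. $[\varrho(\cdot+T),\mathbf{m}(\cdot+T),E(\cdot+T)]\in\mathcal{U}[\varrho(T),\mathbf{m}(T),E(T-)]$; and closure under continuation, i.e. the concatenation of a solution on $[0,T]$ with any element of $\mathcal{U}[\varrho(T),\mathbf{m}(T),E(T-)]$ belongs to $\mathcal{U}[\varrho_0,\mathbf{m}_0,E_0]$. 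Assuming these, the remainder is essentially routine.

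\emph{Iterative minimization.} I would fix a countable family $\{F_k\}_{k\in\mathbb{N}}\subset C_b(X)$ that is dense and separates points, arranging that one distinguished $F_k$ is strictly increasing in the $E$-coordinate (to later force admissibility). For each $k$ and each rational $\lambda>0$ set
\[
I_{k,\lambda}[\varrho,\mathbf{m},E] \;=\; \int_0^\infty e^{-\lambda t}\, F_k\!\left(\varrho(t),\mathbf{m}(t),E(t)\right)\, dt,
\]
which is continuous on $Q$. Enumerate the whole collection as $\{I_n\}_{n\in\mathbb{N}}$, put $\mathcal{U}_0=\mathcal{U}$, and define recursively
\[
\mathcal{U}_{n+1}[\varrho_0,\mathbf{m}_0,E_0] \;=\; \Bigl\{\,[\varrho,\mathbf{m},E]\in\mathcal{U}_n[\varrho_0,\mathbf{m}_0,E_0]\;:\; I_n[\varrho,\mathbf{m},E] = \min_{\mathcal{U}_n[\varrho_0,\mathbf{m}_0,E_0]} I_n\,\Bigr\}.
\]
The minima are attained by compactness plus continuity, so each $\mathcal{U}_n$ is non--empty and compact; the key point is that shift invariance and continuation are inherited at every step thanks to the cocycle identity
\[
I_{k,\lambda}[\varrho,\mathbf{m},E] \;=\; \int_0^{T} e^{-\lambda t} F_k\, dt \;+\; e^{-\lambda T}\, I_{k,\lambda}\!\bigl[\varrho(\cdot+T),\mathbf{m}(\cdot+T),E(\cdot+T)\bigr],
\]
which reduces minimization of the tail to minimization on $\mathcal{U}[\varrho(T),\mathbf{m}(T),E(T-)]$.

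\emph{Singleton, measurability, semigroup, admissibility.} Since the $F_k$ separate points of $X$ and the Laplace family $\{e^{-\lambda t}\}$ separates continuous functions of time, the intersection $\bigcap_n\mathcal{U}_n[\varrho_0,\mathbf{m}_0,E_0]$ is a singleton, whose unique element I define to be $U\{\varrho_0,\mathbf{m}_0,E_0\}$. Compactness of $\mathcal{U}[\cdot]$ combined with sequential stability of dissipative weak solutions yields upper semi--continuity of $\mathcal{U}$ as a multi--function, and hence Borel measurability in the Effros sense; continuity of each $I_n$ propagates this through each restriction to $\arg\min I_n$, so the limit selector $U$ is Borel. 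For the semigroup property, given $t_1\geq 0$, concatenate $U\{\varrho_0,\mathbf{m}_0,E_0\}\bigr|_{[0,t_1]}$ with $U\{\varrho(t_1),\mathbf{m}(t_1),E(t_1-)\}$; by continuation this concatenation lies in $\mathcal{U}[\varrho_0,\mathbf{m}_0,E_0]$, and the cocycle identity shows it attains $\min I_n$ on every $\mathcal{U}_n$, so by uniqueness it equals $U\{\varrho_0,\mathbf{m}_0,E_0\}$, giving the required identity $U\{\varrho_0,\mathbf{m}_0,E_0\}(t_1+t_2)=U\{\varrho(t_1),\mathbf{m}(t_1),E(t_1-)\}(t_2)$. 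Admissibility is automatic from the $E$-monotone functional included in the family, which forces $\prec$-minimality. The main technical burden is therefore concentrated in Section \ref{properties of U}: proving compactness of $\mathcal{U}[\varrho_0,\mathbf{m}_0,E_0]$ in $Q$ (which requires the Lions--Feireisl effective viscous flux and renormalization machinery) and checking shift invariance and continuation through times at which the $BV$ function $E$ may jump, the one-sided limit $E(T-)$ having been chosen precisely so that these operations are consistent.
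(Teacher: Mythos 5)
Your overall scheme is exactly the paper's: iterated minimization of Laplace--transform functionals $I_{\lambda,F}$ over the solution set, relying on the properties (\textbf{P1})--(\textbf{P5}) of Section \ref{properties of U}, compactness plus Cantor's intersection argument for non--emptiness of the intersection, uniqueness of the Laplace transform together with a countable separating family to show the intersection is a singleton, and shift/continuation to obtain the semigroup identity. Your minor deviations are harmless: you separate all three coordinates of $X$ directly, whereas the paper uses only $\alpha(E)$ and $\alpha\left(\int_\Omega \mathbf{m}\cdot\mathbf{e}_n\,dx\right)$ and then recovers $\varrho^1=\varrho^2$ from the continuity equation and the common initial density; and you prove the semigroup identity via the continuation property and uniqueness, while the paper invokes shift invariance of $\mathcal{U}^\infty$ --- both routes are available since both (\textbf{P4}) and (\textbf{P5}) are preserved under the selection. (One cosmetic point: a countable family in $C_b(X)$ can separate points of the separable space $X$ but cannot be dense in $C_b(X)$; density is neither available nor needed.)

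The one genuine gap is the admissibility part of the statement. You assert that admissibility is ``automatic from the $E$--monotone functional included in the family'', but as written this does not follow, for two reasons. First, the distinguished functional must depend on $E$ alone, as the paper's $\alpha=\alpha(E)$ does: a competitor $[\tilde\varrho,\tilde{\mathbf{m}},\tilde E]\prec[\varrho,\mathbf{m},E]$ in Definition \ref{admissible solution} generally has a different density and momentum, so for a general $F_k(\varrho,\mathbf{m},E)$ that is merely increasing in the $E$--coordinate you cannot compare $F_k(\tilde\varrho,\tilde{\mathbf{m}},\tilde E)$ with $F_k(\varrho,\mathbf{m},E)$ pointwise in time. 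Second, and more importantly, the position of this functional in your enumeration matters: admissibility quantifies over \emph{all} dissipative weak solutions with the same data, so the energy functional must be minimized over the full set $\mathcal{U}[\varrho_0,\mathbf{m}_0,E_0]$, i.e. it must be the \emph{first} step of the iteration. If it appears only at some later stage $n$, the competitor $\tilde z\prec z^\ast$ may already have been discarded by an earlier $\arg\min$ restriction (say, by a momentum functional), and minimality of the energy functional over the reduced set $\mathcal{U}_n$ yields no comparison with $\tilde z$, hence no conclusion $E=\tilde E$. This is precisely why the paper sets $\mathcal{U}^j=I_{k(j),n(j)}\circ\dots\circ I_{k(1),n(1)}\circ I_{1,\alpha}\circ\mathcal{U}$, with the purely energy--dependent, strictly increasing $\alpha$ applied before everything else. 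With that (easy) fix --- put a functional of the form $\int_0^\infty e^{-\lambda t}\alpha(E(t))\,dt$ first --- your argument goes through and coincides with the paper's proof.
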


	Theorem \ref{main result} is stated in terms of the three state variables $[\varrho, \textbf{m},E]$. In Section 
	\ref{restriction} below, we state a version of this result in terms of the natural state variables $[\varrho, \textbf{m}]$, see 
	Theorem \ref{final}. The price to pay is validity of the semigroup property for any time with the exception of a zero measure set.

	\section{Properties of $\mathcal{U}$} \label{properties of U}
	The set--valued map 
	\begin{equation*}
		D \ni [\varrho_0, \textbf{m}_0, E_0] \mapsto \mathcal{U}[\varrho_0, \textbf{m}_0,E_0] \in 2^Q,
	\end{equation*}
	introduced in the previous section, enjoys the following properties; in particular, the last two are the main tools we will need in order to construct the semiflow.
	\begin{itemize}
		\item[(\textbf{P1})] \textit{Non--emptiness}. For any $[\varrho_0, \textbf{m}_0, E_0] \in D$,
		\begin{equation*}
			\mathcal{U}[\varrho_0, \textbf{m}_0, E_0] \subset Q \mbox{ is non--empty}.
		\end{equation*} 
		
		This statement is equivalent in proving the \textit{existence} of a dissipative weak solution for any initial data $[\varrho_0, \textbf{m}_0, E_0]\in D$; more precisely, we have the following result.
		\begin{proposition} \label{existence}
			Let $[\varrho_0, \textbf{m}_0, E_0] \in D$ be given; then the Navier--Stokes system \eqref{continuity equation}-- \eqref{viscous stress tensor} admits a dissipative weak solution in the sense of Definition \ref{dissipative solution} with the initial data $[\varrho_0, \textbf{m}_0, E_0]$. 
		\end{proposition}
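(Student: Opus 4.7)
The plan is to reduce the statement to the classical existence theory for the compressible Navier--Stokes system with non--monotone pressure (Lions \cite{Lio}, Feireisl \cite{Fei}, and subsequent refinements) and then to adjust the energy so that it matches the prescribed initial value $E_0$, which is a priori allowed to be strictly larger than the natural kinetic plus potential energy of $[\varrho_0,\mathbf{m}_0]$.

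First I would invoke the standard existence result: under the hypothesis $\gamma>N/2$ and the growth conditions \eqref{pressure} on $p$, for data $[\varrho_0,\mathbf{m}_0]$ with $\varrho_0\in L^\gamma(\Omega)$, $\varrho_0\geq 0$, and $\int_{\Omega}\bigl[\tfrac{1}{2}|\mathbf{m}_0|^2/\varrho_0+P(\varrho_0)\bigr]dx<\infty$, there exist $\varrho,\mathbf{u}$ in the regularity class of Definition \ref{dissipative solution}(i) solving the renormalized continuity equation \eqref{weak formulation renormalized continuity equation}, the momentum equation \eqref{weak formulation of the balance of momentum}, with $\varrho(0,\cdot)=\varrho_0$ and $(\varrho\mathbf{u})(0,\cdot)=\mathbf{m}_0$, and such that the function $\widetilde{E}(\tau)=\int_{\Omega}\bigl[\tfrac12\varrho|\mathbf{u}|^2+P(\varrho)\bigr](\tau,\cdot)\,dx$ is non--increasing and satisfies the distributional energy inequality with $\widetilde{E}(0-)=\int_{\Omega}\bigl[\tfrac12|\mathbf{m}_0|^2/\varrho_0+P(\varrho_0)\bigr]dx$. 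Because $\widetilde{E}$ is non--increasing and non--negative, it lies in $BV_{loc}([0,\infty))$ automatically. The construction is by now standard: one regularizes the system (e.g.\ adding artificial viscosity and a pressure perturbation), obtains smooth approximate solutions, derives energy and higher integrability estimates (notably the $L^{p}_{t,x}$ bound on $p(\varrho)$), passes to the limit by the Lions--Feireisl compactness argument based on the effective viscous flux identity and the weak continuity of $p(\varrho)\operatorname{div}_x\mathbf{u}-(\lambda+2\mu)(\operatorname{div}_x\mathbf{u})\varrho$, and finally uses Friedrichs' commutator lemma to justify the renormalized continuity equation. I would only state these steps and cite \cite{Fei,Lio} for the details.

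Next I would match the prescribed initial energy $E_0$. Define
\begin{equation*}
E(\tau)=\widetilde{E}(\tau)\quad\text{for }\tau>0,\qquad E(0-)=E_0.
\end{equation*}
Since $[\varrho_0,\mathbf{m}_0,E_0]\in D$ we have $\widetilde{E}(0+)\leq \widetilde{E}(0-)\leq E_0$, so $E$ is still non--increasing on $[0,\infty)$ in the sense of one--sided limits and belongs to $BV_{loc}$. The integral inequality \eqref{energy inequality} then continues to hold: for a test function $\psi\in C_c^1[0,\infty)$, $\psi\geq 0$, replacing $\widetilde{E}$ by $E$ only modifies the boundary contribution at $\tau_1=0$ by $\bigl(\widetilde{E}(0-)-E_0\bigr)\psi(0)\leq 0$, which preserves the sign of the inequality. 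The identity \eqref{energy} is unchanged for $\tau>0$, so condition (iv) of Definition \ref{dissipative solution} is satisfied (the a.e.\ statement allows the possible jump at $\tau=0$).

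The main obstacle, and the only truly delicate point, is the construction of $\widetilde{E}$ itself together with the renormalized continuity equation under the non--monotone pressure condition \eqref{pressure}; this is precisely what the compactness machinery of Feireisl is designed to handle, and in particular the assumption $\gamma>N/2$ is used there in order to obtain the strong convergence of the density via the effective viscous flux and Lions' commutator lemma. Once the existence of $[\varrho,\mathbf{u},\widetilde{E}]$ is granted from \cite{Fei}, the rest of the proof is the bookkeeping just described.
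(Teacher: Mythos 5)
Your proposal is correct and follows essentially the same route as the paper, which simply invokes the standard Lions--Feireisl existence theory (the paper's proof is the citation to \cite{Fei}, Theorem 7.1, whose hypotheses match the pressure assumptions \eqref{pressure}). Your additional bookkeeping step --- setting $E(0-)=E_0$ when $E_0$ exceeds the natural initial energy and noting that this only makes the boundary term in \eqref{energy inequality} more negative while preserving monotonicity, $BV_{loc}$ regularity, and the a.e.\ identity \eqref{energy} --- is a correct and sensible way to make explicit what the paper leaves implicit.
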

	
		For the proof see \cite{Fei}, Theorem 7.1.
		
		\item[(\textbf{P2})] \textit{Compactness.} For any $[\varrho_0, \textbf{m}_0, E_0] \in D$,
		\begin{equation*}
			\mathcal{U}[\varrho_0, \textbf{m}_0, E_0] \subset Q \mbox{ is compact}.
		\end{equation*} 
		
		This statement is equivalent in showing the \textit{weak sequential stability} of the solution set; specifically, the following result holds.
		
		\begin{proposition} \label{sequential stability}
			Suppose that $\{ \varrho_{0,\varepsilon}, \textbf{m}_{0,\varepsilon}, E_{0,\varepsilon} \}_{\varepsilon >0}\subset D$ is a sequence of data giving rise to a family of dissipative weak solutions $\{ \varrho_{\varepsilon}, \textbf{m}_{\varepsilon}, E_{\varepsilon} \}_{\varepsilon >0}$, that is, $[\varrho_{\varepsilon}, \textbf{m}_{\varepsilon}, E_{\varepsilon}]\in \mathcal{U}[\varrho_{0,\varepsilon},\textbf{m}_{0,\varepsilon}, E_{0,\varepsilon}]$. Moreover, we assume that the initial densities converge strongly 
			\begin{equation*}
			\varrho_{0,\varepsilon} \rightarrow \varrho_0 \quad \mbox{in } L^{\gamma}(\Omega)
			\end{equation*}
			and there exists  a constant $\overline{E}>0$ such that $E_{0,\varepsilon}\leq \overline{E}$ for all $\varepsilon>0$.
			
			Then, at least for suitable subsequences,
			\begin{equation*}
			\textbf{m}_{0,\varepsilon} \rightharpoonup \textbf{m}_0 \quad \mbox{in } L^{\frac{2\gamma}{\gamma+1}}(\Omega; \mathbb{R}^N), \quad E_{0,\varepsilon} \rightarrow E_0,
			\end{equation*}
			and
			\begin{equation*} 
			\begin{aligned}
			\varrho_{\varepsilon} \rightarrow \varrho \quad &\mbox{in } C_{weak,loc}([0,\infty); L^{\gamma}(\Omega)) \\
			\textbf{m}_{\varepsilon} \rightarrow \textbf{m} \quad &\mbox{in } C_{weak,loc}([0,\infty); L^{\frac{2\gamma}{\gamma+1}}(\Omega; \mathbb{R}^N))\\
			E_{\varepsilon}(\tau) \rightarrow E(\tau) \quad &\mbox{for every } \tau \in [0,\infty) \mbox{ and in } L^1_{loc}(0,\infty),
			\end{aligned}
			\end{equation*}
			where
			\begin{equation*}
			[\varrho, \textbf{m}, E] \in \mathcal{U}[\varrho_0, \textbf{m}_0, E_0].
			\end{equation*}
		\end{proposition}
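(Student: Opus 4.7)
The plan is to follow the classical Lions--Feireisl weak sequential stability program, adapted to the fact that the total energy $E$ is carried along as an independent phase variable. First I would extract uniform bounds from the energy inequality \eqref{energy inequality}: together with $E_{0,\varepsilon} \leq \overline{E}$, this gives $\esssup_\tau E_\varepsilon(\tau) \leq \overline{E}$ and a uniform bound on the dissipation $\int_0^T \int_\Omega \mathbb{S}(\nabla_x \textbf{u}_\varepsilon) : \nabla_x \textbf{u}_\varepsilon \, dx\, dt$. Combined with Korn's inequality and the no--slip boundary condition, this yields uniform bounds on $\varrho_\varepsilon$ in $L^\infty_t L^\gamma_x$, on $\textbf{m}_\varepsilon$ in $L^\infty_t L^{2\gamma/(\gamma+1)}_x$, on $\textbf{u}_\varepsilon$ in $L^2_t W^{1,2}_{0,x}$, and on $E_\varepsilon$ in $L^\infty \cap BV_{loc}$. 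Hence, using Helly's selection theorem, $E_\varepsilon \to E$ pointwise outside a countable set and in $L^1_{loc}$, with $E_{0,\varepsilon} = E_\varepsilon(0-) \to E_0$; the assumption $E_{0,\varepsilon}\leq\overline E$ forces $\textbf{m}_{0,\varepsilon}$ to be uniformly bounded in $L^{2\gamma/(\gamma+1)}$, so weak convergence to some $\textbf{m}_0$ holds along a subsequence, and the convexity of $f$ together with strong convergence of $\varrho_{0,\varepsilon}$ ensures $[\varrho_0,\textbf{m}_0,E_0] \in D$.

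Next I would upgrade weak convergence to convergence in the trajectory space. From \eqref{weak formulation renormalized continuity equation} and \eqref{weak formulation of the balance of momentum} one reads off equicontinuity in time of $t \mapsto \int_\Omega \varrho_\varepsilon \varphi \, dx$ and $t \mapsto \int_\Omega \textbf{m}_\varepsilon \cdot \bm{\varphi}\, dx$ for smooth test functions, which together with the $L^p$ bounds gives convergence in $C_{weak,loc}([0,\infty); L^\gamma)$ and $C_{weak,loc}([0,\infty); L^{2\gamma/(\gamma+1)})$. Since $\ell > N/2$, the embedding $L^p \hookrightarrow\hookrightarrow W^{-\ell,2}$ is compact for the relevant exponents, so this lifts to strong convergence in the components of $Q$.

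The main obstacle, as always for compressible Navier--Stokes, is passing to the limit in the nonlinear terms of \eqref{weak formulation of the balance of momentum}, namely the pressure $p(\varrho_\varepsilon)$ and the convective term $\varrho_\varepsilon \textbf{u}_\varepsilon \otimes \textbf{u}_\varepsilon$. Weak convergence of $\varrho_\varepsilon$ is insufficient because of the $\gamma$--growth of $p$; the remedy is the Lions--Feireisl machinery, valid under $\gamma > N/2$: (i) derive the weak continuity of the effective viscous flux $p(\varrho_\varepsilon) - (\lambda + \frac{2(N-1)}{N}\mu)\,\divv_x \textbf{u}_\varepsilon$ using commutator estimates with test functions built from the Riesz transform $\nabla_x \Delta^{-1}[\mathbbm{1}_\Omega \varrho_\varepsilon]$; (ii) verify the renormalized continuity equation for the limit via DiPerna--Lions regularization; (iii) control the oscillation defect measure and conclude $\overline{\varrho \log \varrho} = \varrho \log \varrho$, yielding strong convergence $\varrho_\varepsilon \to \varrho$ in $L^1_{loc}$ and hence in $L^q_{loc}$ for $q < \gamma$. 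The non--monotone pressure \eqref{pressure} is accommodated by the truncation argument of \cite{Fei}. Once strong convergence of the density is in hand, $p(\varrho_\varepsilon) \to p(\varrho)$ weakly, and the convective term converges because $\textbf{m}_\varepsilon \to \textbf{m}$ in $C_{weak,loc}$ combines with weak convergence of $\textbf{u}_\varepsilon$ in $L^2_t W^{1,2}_x$ after writing $\varrho_\varepsilon \textbf{u}_\varepsilon \otimes \textbf{u}_\varepsilon = \textbf{m}_\varepsilon \otimes \textbf{u}_\varepsilon$.

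For the energy inequality in the limit, I would use lower semicontinuity: the dissipation integral is weakly lower semicontinuous in $\nabla_x \textbf{u}_\varepsilon$, the convex functional $\int_\Omega (\frac{1}{2} |\textbf{m}|^2/\varrho + P(\varrho))\,dx$ passes to the limit thanks to strong convergence of $\varrho_\varepsilon$ and weak convergence of $\textbf{m}_\varepsilon$, and the pointwise convergence $E_\varepsilon(\tau\pm) \to E(\tau\pm)$ holds at all continuity points of the BV limit $E$. Taking test functions $\psi$ supported away from jump points yields \eqref{energy inequality} for $E$, and the initial condition $E(0-) = E_0$ follows from $E_\varepsilon(0-) = E_{0,\varepsilon} \to E_0$. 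This establishes $[\varrho, \textbf{m}, E] \in \mathcal{U}[\varrho_0, \textbf{m}_0, E_0]$.
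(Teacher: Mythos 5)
Your overall strategy coincides with what the paper itself does: the paper offers no proof of Proposition \ref{sequential stability} but refers to \cite{Fei}, Theorems 6.1 and 6.2, i.e.\ exactly the Lions--Feireisl weak sequential stability machinery you sketch (uniform bounds from the energy inequality and Korn's inequality, Helly's theorem for the monotone energies, equicontinuity in the weak topologies, effective viscous flux, renormalized continuity equation and oscillation defect measure for the strong convergence of the density, truncation for the non--monotone pressure). Two remarks of precision: Helly's selection theorem for uniformly bounded non--increasing functions gives pointwise convergence at \emph{every} $\tau$ along a subsequence, which is what the claim ``$E_\varepsilon(\tau)\to E(\tau)$ for every $\tau\in[0,\infty)$'' requires, not merely convergence outside a countable set; and the convergence of the convective term needs the compactness of $\textbf{m}_\varepsilon$ in a space dual to $W^{1,2}_0$, not just $C_{weak,loc}$, though this is standard.

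The one genuine gap concerns condition (iv) of Definition \ref{dissipative solution} for the limit triple. Here $E$ is an independent phase variable, and membership of $[\varrho,\textbf{m},E]$ in $\mathcal{U}[\varrho_0,\textbf{m}_0,E_0]$ requires the \emph{equality} $E(\tau)=\int_\Omega\left[\tfrac12|\textbf{m}|^2/\varrho+P(\varrho)\right](\tau,\cdot)\,dx$ for a.e.\ $\tau$, where $E$ is the Helly limit of the $E_\varepsilon$. Your mechanism for this step --- convexity of $[\varrho,\textbf{m}]\mapsto|\textbf{m}|^2/\varrho$ plus strong convergence of $\varrho_\varepsilon$ and weak convergence of $\textbf{m}_\varepsilon$ --- only yields lower semicontinuity, i.e.\ $\int_\Omega\left[\tfrac12|\textbf{m}|^2/\varrho+P(\varrho)\right](\tau,\cdot)\,dx\le E(\tau)$, and does not exclude an energy defect, in which case the limit would fail (iv) and would not lie in $\mathcal{U}[\varrho_0,\textbf{m}_0,E_0]$. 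To close the argument you must show the energies converge without defect for a.e.\ $t$: write $\int_\Omega\tfrac12\varrho_\varepsilon|\textbf{u}_\varepsilon|^2\,dx=\int_\Omega\tfrac12\textbf{m}_\varepsilon\cdot\textbf{u}_\varepsilon\,dx$ and pair the strong convergence $\textbf{m}_\varepsilon\to\textbf{m}$ in $C_{loc}([0,\infty);W^{-1,2}(\Omega;\mathbb{R}^N))$ (compact embedding $L^{2\gamma/(\gamma+1)}\hookrightarrow\hookrightarrow W^{-1,2}$, valid since $\gamma>N/2$) with the weak convergence $\textbf{u}_\varepsilon\rightharpoonup\textbf{u}$ in $L^2_{loc}([0,\infty);W^{1,2}_0(\Omega;\mathbb{R}^N))$, so the kinetic energy converges after integration in time; for the potential part, a.e.\ convergence of $\varrho_\varepsilon$ together with the equi--integrability of $\varrho_\varepsilon^\gamma$ furnished by the improved pressure estimates gives $\int_\Omega P(\varrho_\varepsilon)\,dx\to\int_\Omega P(\varrho)\,dx$ in $L^1_{loc}(0,\infty)$. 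Since $E_\varepsilon(\tau)$ equals this functional for a.e.\ $\tau$ and $E_\varepsilon\to E$ pointwise and in $L^1_{loc}$, this identifies $E$ a.e.\ with the energy of the limit solution and yields (iv); this no--defect property is precisely the feature distinguishing Navier--Stokes from Euler that the paper stresses in the introduction and exploits again in Section \ref{restriction}.
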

		For the proof see \cite{Fei}, Theorems 6.1 and 6.2.
		
		\item[(\textbf{P3})] \textit{Measurability}. The mapping
		\begin{equation*}
			D \ni [\varrho_0, \textbf{m}_0, E_0] \mapsto \mathcal{U}[\varrho_0, \textbf{m}_0,E_0] \in 2^Q
		\end{equation*}
		is Borel measurable.
		
		Notice that, since $\mathcal{U}[\varrho_0, \textbf{m}_0, E_0]$ is a compact subset of the separable space $Q$ for any initial data $[\varrho_0, \textbf{m}_0, E_0] \in D$, requiring the Borel measurability of $\mathcal{U}$ is equivalent in proving the measurability with respect to the Hausdorff metric on the space of all compact subsets of $Q$. Due to Proposition \ref{sequential stability}, it is sufficient to apply the following lemma with $Y=D$ and $X=Q$.
		
		\begin{lemma}
			Let $Y$ be a metric space and $\mathcal{B}$  its Borel $\sigma$--field. Let $y \mapsto K_y$ be a map of $Y$ into $\mbox{Comp}(X)$ for some separable metric space $X$, with $\mbox{Comp}(X)$ the set of all the compact subsets of $X$. Suppose for any sequence $y_n \mapsto y$ and $x_n \in K_{y_n}$ , it is true that $x_n$ has a limit point $x$ in $K_y$. Then the map $y \mapsto K_y$ is a Borel map of $Y$ into $\mbox{Comp}(X)$.
		\end{lemma}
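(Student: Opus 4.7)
The plan is to verify Borel measurability of $y \mapsto K_y$ through a countable subbasis for the Hausdorff metric topology on $\mathrm{Comp}(X)$. Recall that this topology has as a subbase the collection of sets $U^+ := \{K \in \mathrm{Comp}(X) : K \subset U\}$ and $U^- := \{K \in \mathrm{Comp}(X) : K \cap U \neq \emptyset\}$, with $U$ ranging over the open subsets of $X$. Since $X$ is separable metric, $(\mathrm{Comp}(X), d_H)$ is itself separable and hence second countable, so its Borel $\sigma$-algebra is generated by the countably many sets obtained by letting $U$ run through a countable base of $X$. It therefore suffices to show that both preimages $\{y \in Y : K_y \subset U\}$ and $\{y \in Y : K_y \cap U \neq \emptyset\}$ are Borel in $Y$ for every open $U \subset X$.

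For the first preimage I expect to show that it is actually open. Arguing by contradiction, if $K_y \subset U$ but $y_n \to y$ and $K_{y_n} \not\subset U$ along a subsequence, I pick $x_n \in K_{y_n} \setminus U$; the hypothesis furnishes a limit point $x \in K_y$ of $\{x_n\}$, and since $X \setminus U$ is closed we must have $x \notin U$, contradicting $K_y \subset U$. For the second preimage the same sequential argument shows that for every \emph{closed} $F \subset X$ the set $\{y \in Y : K_y \cap F \neq \emptyset\}$ is closed in $Y$: any limit point of $x_n \in K_{y_n} \cap F$ lies in $K_y$ by the assumption and also in $F$ because $F$ is closed. Writing $U = \bigcup_n F_n$ as a countable union of closed sets (always possible in a metric space, taking for instance $F_n = \{x \in X : d(x, X \setminus U) \geq 1/n\}$), I then obtain
\begin{equation*}
\{y \in Y : K_y \cap U \neq \emptyset\} = \bigcup_{n \in \mathbb{N}} \{y \in Y : K_y \cap F_n \neq \emptyset\},
\end{equation*}
exhibiting this preimage as an $F_\sigma$ set, and in particular Borel.

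The most delicate point to verify will be the reduction to \emph{countably many} subbasic open sets, which rests on second countability of $(\mathrm{Comp}(X), d_H)$ and ultimately on the separability of $X$; this is a standard consequence of the fact that finite subsets of a countable dense subset of $X$ form a countable dense family in $\mathrm{Comp}(X)$. Once that reduction is in place, the two sequential arguments above combine to give the Borel measurability of $y \mapsto K_y$ into $\mathrm{Comp}(X)$, as required.
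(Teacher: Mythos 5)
Your proof is correct; note that the paper itself offers no proof of this lemma — it is quoted from Stroock and Varadhan and the argument is delegated to Lemma 12.1.8 there — so your write-up is a self-contained substitute rather than a variant of an internal proof. The route you take is the natural one: the hypothesis is precisely an upper-semicontinuity property of $y\mapsto K_y$, and you convert it into Borel measurability by testing against the subbasic sets $U^{+}=\{K\in\mathrm{Comp}(X):K\subset U\}$ and $U^{-}=\{K\in\mathrm{Comp}(X):K\cap U\neq\emptyset\}$, using that on compact subsets of a metric space the Vietoris topology coincides with the Hausdorff-metric topology. Both sequential arguments are sound: applying the hypothesis along the subsequence witnessing failure of openness shows $\{y:K_y\subset U\}$ is open, the same argument with a closed set $F$ shows $\{y:K_y\cap F\neq\emptyset\}$ is closed, and the exhaustion $U=\bigcup_n F_n$ with $F_n=\{x:d(x,X\setminus U)\geq 1/n\}$ exhibits $\{y:K_y\cap U\neq\emptyset\}$ as an $F_\sigma$ set. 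One small imprecision to flag: the sets $B^{\pm}$ with $B$ ranging over a countable base of $X$ are not literally a subbase of the hyperspace topology, since a basic Vietoris neighbourhood $\langle B_1,\dots,B_n\rangle$ involves $\bigl(\bigcup_{i}B_i\bigr)^{+}$, i.e. a finite union of base elements rather than a single one; so the sentence asserting that these countably many sets generate the Borel $\sigma$-algebra deserves a word of care. This does not affect your argument, because you actually establish Borel-ness of the preimages of $U^{+}$ and $U^{-}$ for \emph{every} open $U$, and the second countability of $(\mathrm{Comp}(X),d_H)$ — which you justify correctly via finite subsets of a countable dense set of $X$ — guarantees that every open subset of the hyperspace is a countable union of finite intersections of such sets, so the preimage of every open set is Borel. (Implicitly one should also restrict to non-empty compact sets, as in the paper's application where $K_y=\mathcal{U}[\varrho_0,\textbf{m}_0,E_0]$ is non-empty, so that the Hausdorff distance is well defined.)
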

		The proof can be found in \cite{StrVar}, Lemma 12.1.8.
		
		\item[(\textbf{P4})] \textit{Shift invariance}. Introducing the positive shift operator for every $q\in Q$ as
		\begin{equation*}
			S_T \circ q, \ S_T\circ q(t)=q(T+t), \ t\geq 0, 
		\end{equation*}
		then, for any $[\varrho, \textbf{m}, E]\in \mathcal{U}[\varrho_0, \textbf{m}_0,E_0]$, we have
		\begin{equation*}
			S_T \circ [\varrho, \textbf{m}, E] \in \mathcal{U}[\varrho(T), \textbf{m}(T), E(T-)],
		\end{equation*}
		for any $T>0$.
		
		Instead of $E(T-)$, we could choose any $\mathcal{E}\geq E(T+)$ (recall the energy is non-increasing and thus in particular $E(T-)\geq E(T+)$); indeed, this more general result holds.
		
		\begin{lemma}
			Let $[\varrho_0, \textbf{m}_0, E_0]\in D$ and $[\varrho, \textbf{m},E]\in \mathcal{U}[\varrho_0, \textbf{m}_0, E_0]$. Then we have
			\begin{equation*}
			S_T \circ [\varrho, \textbf{m}, E] \in \mathcal{U}[\varrho(T), \textbf{m}(T), \mathcal{E}]
			\end{equation*}
			for any $T>0$, and any $\mathcal{E} \geq E(T+)$.
		\end{lemma}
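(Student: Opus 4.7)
The plan is to verify that the shifted triple $[\tilde\varrho, \tilde{\textbf{m}}, \tilde E] := S_T \circ [\varrho, \textbf{m}, E]$ satisfies all four conditions of Definition \ref{dissipative solution} with initial data $[\varrho(T), \textbf{m}(T), \mathcal{E}]$. The regularity class (i) transfers trivially under temporal translation, so the real work splits into three tasks: checking that the proposed initial datum lies in $D$, then transferring the weak formulations (ii)--(iii) and the energy inequality (iv).

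First I would verify $[\varrho(T), \textbf{m}(T), \mathcal{E}] \in D$. Weak-in-time continuity immediately furnishes $\varrho(T) \in L^{\gamma}(\Omega)$ with $\varrho(T) \geq 0$ and $\textbf{m}(T) \in L^{2\gamma/(\gamma+1)}(\Omega;\mathbb{R}^N)$; what remains is the bound $f(\varrho(T), \textbf{m}(T)) \leq \mathcal{E}$. Pick a sequence $\tau_n \downarrow T$ lying in the full-measure set on which the pointwise identity in (iv) holds, so that $E(\tau_n) = f(\varrho(\tau_n), \textbf{m}(\tau_n))$. Passing to the limit using the weak lower semicontinuity of $f$ (established in Section \ref{set-up}), together with $E(\tau_n) \to E(T+)$ for the non-increasing $E$, yields $f(\varrho(T), \textbf{m}(T)) \leq E(T+) \leq \mathcal{E}$.

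For (ii) and (iii) I would observe that subtracting the integral identity on $[0, T]$ from the one on $[0, T+\tau]$ for the same test function produces a version on $[T, T+\tau]$ still valid for all $\varphi \in C_c^1([0,\infty)\times\Omega)$; then given a test function $\tilde\varphi$ for the shifted system, setting $\varphi(t,x) := \tilde\varphi(t-T,x)$ and changing variables $s = t - T$ gives the shifted identity. For the energy inequality (iv), the case $\tau_1 > 0$ reduces directly by applying the original (iv) on $[T+\tau_1, T+\tau_2]$ with the nonnegative test function $\psi(\cdot - T)$. The case $\tau_1 = 0$ requires an approximation: apply (iv) on $[T+\varepsilon, T+\tau_2]$ for $\varepsilon > 0$ chosen outside the at most countable jump set of $E$ (so that $E((T+\varepsilon)-) = E(T+\varepsilon)$), and let $\varepsilon \downarrow 0$; by the definition of $E(T+)$ and dominated convergence in the remaining terms, one obtains the shifted inequality with $\tilde E(0-) = E(T+)$. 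Finally, for arbitrary $\mathcal{E} \geq E(T+)$ the inequality only tightens: since $\psi(0) \geq 0$, replacing $E(T+)$ by $\mathcal{E}$ makes the boundary contribution $-\mathcal{E}\psi(0)$ smaller, hence the left-hand side remains $\leq 0$.

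The main obstacle I anticipate is the pointwise bound $f(\varrho(T), \textbf{m}(T)) \leq E(T+)$ at \emph{every} $T > 0$, because the representation of $E$ as $f(\varrho,\textbf{m})$ is only guaranteed on a set of full measure in time. The whole argument hinges on combining weak lower semicontinuity of $f$ with a careful choice of approximating sequence $\tau_n \downarrow T$ inside this good set and on exploiting the right-continuity afforded by the non-increasing BV character of $E$. The remaining manipulations are essentially translations, changes of variable, and a sign check.
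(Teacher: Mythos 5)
Your proof is correct and follows essentially the same route as the paper's (much terser) argument: shift the solution and test functions in time, observe that the shifted triple is a dissipative solution with initial energy $E(T+)$, and use monotonicity of $E$ together with $\psi(0)\geq 0$ to allow any $\mathcal{E}\geq E(T+)$. The details you supply — membership of $[\varrho(T),\textbf{m}(T),\mathcal{E}]$ in $D$ via weak lower semicontinuity of the energy functional, and the $\varepsilon\downarrow 0$ limit giving the boundary term $E(T+)$ in the energy inequality — are exactly the steps the paper leaves implicit, so no discrepancy arises.
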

		\begin{proof}
			A dissipative weak solution on the time interval $(0,\infty)$ solves also the same problem on $(T,\infty)$ with the initial data $[\varrho(T), \textbf{m}(T), E(T+)]$. Shifting the test functions in the integrals, this implies
			\begin{equation*}
				S_T \circ [\varrho, \textbf{m}, E] \in \mathcal{U}[\varrho(T), \textbf{m}(T), E(T+)].
			\end{equation*}
			Since the energy is non-increasing, we can choose every $\mathcal{E}\geq E(T+)$ as initial energy; indeed, everything will be well-defined
			\begin{equation*}
				S_T\circ E(0-) = \mathcal{E} \geq E(T+).
			\end{equation*}
		\end{proof}
	
		\item[(\textbf{P5})] \textit{Continuation}. Introducing the continuation operator for any $q_1, q_2 \in Q$ as 
		\begin{equation*}
			q_1 \cup_T q_2(t)= \begin{cases}
				q_1(t) & \mbox{for }0\leq t \leq T, \\
				q_2(t-T) &\mbox{for } t>T,
			\end{cases}
		\end{equation*}
		then, if $T>0$, and
		\begin{equation*}
			[\varrho^1, \textbf{m}^1, E^1] \in \mathcal{U}[\varrho_0, \textbf{m}_0, E_0], \ [\varrho^2, \textbf{m}^2, E^2] \in \mathcal{U}[\varrho^1(T), \textbf{m}^1(T), E^1(T-)],
		\end{equation*}
		then
		\begin{equation*}
			[\varrho^1, \textbf{m}^1, E^1] \cup_T [\varrho^2, \textbf{m}^2, E^2] \in \mathcal{U}[\varrho_0, \textbf{m}_0, E_0].
		\end{equation*}
		
		In this case, instead of $E^1(T-)$, we could choose any $\mathcal{E} \leq E^1(T-)$; indeed, this more general result holds.
		
		\begin{lemma}
			Let $[\varrho_0, \textbf{m}_0, E_0]\in D$ and 
			\begin{equation*}
			[\varrho^1, \textbf{m}^1, E^1] \in \mathcal{U}[\varrho_0, \textbf{m}_0, E_0], \ [\varrho^2, \textbf{m}^2, E^2] \in \mathcal{U}[\varrho^1(T), \textbf{m}^1(T), \mathcal{E}],
			\end{equation*}
			for some $\mathcal{E} \leq E^1(T-)$. Then
			\begin{equation*}
			[\varrho^1, \textbf{m}^1, E^1] \cup_T [\varrho^2, \textbf{m}^2, E^2] \in \mathcal{U}[\varrho_0, \textbf{m}_0, E_0].
			\end{equation*}
		\end{lemma}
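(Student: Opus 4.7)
Set $[\varrho, \textbf{m}, E] := [\varrho^1, \textbf{m}^1, E^1] \cup_T [\varrho^2, \textbf{m}^2, E^2]$. My plan is to verify the four conditions of Definition \ref{dissipative solution} in turn, using that the two pieces satisfy them on their respective time intervals and exploiting the matching of data at $t = T$ that is built into the hypothesis $[\varrho^2, \textbf{m}^2, E^2] \in \mathcal{U}[\varrho^1(T), \textbf{m}^1(T), \mathcal{E}]$. For the regularity class (i), weak continuity of $\varrho$ and $\textbf{m}$ on $[0, T]$ and on $[T, \infty)$ is inherited from the two pieces, and weak continuity through $t = T$ follows from $\varrho^2(0, \cdot) = \varrho^1(T, \cdot)$ and $\textbf{m}^2(0, \cdot) = \textbf{m}^1(T, \cdot)$. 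The energy $E$ is BV on each subinterval, and at $T$ the jump $E(T+) = E^2(0+) \leq \mathcal{E} \leq E^1(T-) = E(T-)$ is only downward, so $E \in BV_{loc}[0, \infty)$.

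For the weak formulations (ii) and (iii), I fix a test function $\varphi \in C_c^1([0, \infty) \times \Omega)$ (resp.\ $\bm\varphi$) and $\tau > T$; the case $\tau \leq T$ is immediate from the identity for the first piece. I split $\int_0^\tau = \int_0^T + \int_T^\tau$, apply the identity for $[\varrho^1, \textbf{m}^1]$ on $[0, T]$, and apply the identity for $[\varrho^2, \textbf{m}^2]$ on $[0, \tau - T]$ with the time-shifted test function $\tilde\varphi(s, x) := \varphi(s + T, x)$. The two boundary terms at $t = T$ that arise are $+ \int_\Omega B(\varrho^1(T))\varphi(T, \cdot)\, dx$ from the first identity and $-\int_\Omega B(\varrho^2(0))\varphi(T, \cdot)\, dx$ from the second (and analogously $\pm \int_\Omega \varrho^{1,2}\textbf{u}^{1,2} \cdot \bm\varphi(T, \cdot)\, dx$ for the momentum equation); these cancel thanks to the matching. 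Summing the two contributions and recognising the RHS as the space--time integrand evaluated on the concatenated $[\varrho, \textbf{u}]$ yields exactly \eqref{weak formulation renormalized continuity equation} (resp.\ \eqref{weak formulation of the balance of momentum}) for the concatenated trajectory.

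The energy inequality (iv) is where I expect the main care to be needed. The pointwise formula \eqref{energy} is inherited almost everywhere from the two pieces. For the distributional inequality \eqref{energy inequality} with $\psi \in C_c^1[0, \infty)$, $\psi \geq 0$, and $\tau_1 < T < \tau_2$, I would apply the inequality to the first piece on $[\tau_1, \tau']$ and let $\tau' \to T-$, producing a boundary term $E^1(T-)\psi(T)$, and to the time-shifted second piece on $[\tau'', \tau_2]$, letting $\tau'' \to T+$, producing a boundary term $-E^2(0+)\psi(T)$. Adding the two inequalities gives the desired LHS for $[\varrho, \textbf{m}, E]$ on $[\tau_1, \tau_2]$ plus the non-negative surplus $\bigl(E^1(T-) - E^2(0+)\bigr)\psi(T) \geq 0$, where the crucial use of the hypothesis $\mathcal{E} \leq E^1(T-)$ combined with the condition $E^2(0+) \leq \mathcal{E}$ from Definition \ref{dissipative solution} (iv) applied to the second piece enters exactly here. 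Since this surplus sits on the LHS with a favourable sign, discarding it only strengthens the inequality. The viscous dissipation integrals combine trivially because $\textbf{u}$ is the concatenation of $\textbf{u}^1$ and $\textbf{u}^2(\cdot - T)$, and the condition $E(0-) = E_0$ is preserved because the first piece is unchanged near $0$. The remaining cases $\tau_2 \leq T$ or $\tau_1 \geq T$ follow from the inequality applied to a single piece. This completes the verification that $[\varrho, \textbf{m}, E] \in \mathcal{U}[\varrho_0, \textbf{m}_0, E_0]$.
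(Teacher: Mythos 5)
Your proof is correct and follows essentially the same route as the paper: the paper's own proof is a two-sentence sketch of exactly this pasting argument (the integral identities glue because the second piece starts from $[\varrho^1(T),\textbf{m}^1(T)]$, and the energy stays non-increasing across $T$ because $E^2(0+)\leq\mathcal{E}\leq E^1(T-)$), which you have simply carried out in full detail.
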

		\begin{proof}
			We are simply pasting two solutions together at the time $T$, letting the second start from the point reached by the first one at the time $T$; thus the integral identities remain satisfied. Choosing the initial energy for $[\varrho^2, \textbf{m}^2, E^2]$ less or equal $E^1(T-)$, the energy of the solution $[\varrho^1, \textbf{m}^1, E^1] \cup_T [\varrho^2, \textbf{m}^2, E^2]$ remains non-increasing on $(0,\infty)$.
		\end{proof}
		
	\end{itemize}
	
	\section{Semiflow selection} \label{Semiflow selection}
	Starting from the family $\mathcal{U}[\varrho_0, \textbf{m}, E_0]$ of dissipative weak solutions for a fixed initial data $[\varrho_0, \textbf{m}_0, E_0] \in D$, the idea for the construction of the selection is to make this set smaller and smaller choosing the minima of particular functionals. More precisely, following the same arguments presented in \cite{BreFeiHof}, we consider the family of functionals
	\begin{equation*}
		I_{\lambda,F}[\varrho, \textbf{m},E]= \int_{0}^{\infty} e^{-\lambda t}F(\varrho(t),\textbf{m}(t),E(t)) dt, \quad \lambda>0,
	\end{equation*}
	where $F: X= W^{-\ell,2}(\Omega)\times W^{-\ell,2}(\Omega;\mathbb{R}^N) \times \mathbb{R}\rightarrow \mathbb{R}$ is a bounded and continuous functional. This choice is justified by the fact that $I_{\lambda,F}$ can be seen as Laplace transform of the functional $F$, an useful interpretation in the proof of the existence of the semiflow, as we will see in the next section.
	
	Given $I_{\lambda,F}$ and a set-valued mapping $\mathcal{U}$, we define a selection mapping $I_{\lambda,F}\circ \mathcal{U}$ by
	\begin{align*}
		&I_{\lambda,F}\circ \mathcal{U}[\varrho_0,\textbf{m}_0, E_0] \\
		&\quad = \{ [\varrho, \textbf{m},E] \in \mathcal{U}[\varrho_0,\textbf{m}_0, E_0] \ | \ I_{\lambda,F}[\varrho, \textbf{m},E] \leq I_{\lambda,F}[\tilde{\varrho}, \tilde{ \textbf{m}},\tilde{E}] \ \mbox{for all }  [\tilde{\varrho}, \tilde{ \textbf{m}},\tilde{E}] \in \mathcal{U}[\varrho_0,\textbf{m}_0, E_0] \}.
	\end{align*}
	Notice that a minimum exists since $I_{\lambda,F}$ is continuous on $Q$ and the set $\mathcal{U}[\varrho_0,(\varrho \textbf{u})_0, E_0]$ is compact in $Q$. We obtain the following result for the set $I_{\lambda,F}\circ \mathcal{U}$.
	\begin{proposition} \label{properties}
		Let $\lambda>0$ and $F$ be a bounded continuous functional on $X$. Let
		\begin{equation*}
			\mathcal{U}: [\varrho_0, \textbf{m}_0, E_0]  \in D \mapsto \mathcal{U}[\varrho_0, \textbf{m}_0, E_0] \subset 2^Q
		\end{equation*}
		be a multi-valued mapping having the properties (\textbf{P1}) -- (\textbf{P5}). Then the map $I_{\lambda,F} \circ \mathcal{U}$ enjoys  (\textbf{P1}) -- (\textbf{P5}) as well.
	\end{proposition}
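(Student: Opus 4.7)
The plan is to verify the five properties in turn, using throughout that $I_{\lambda,F}\colon Q\to\mathbb{R}$ is continuous (Lebesgue dominated convergence with envelope $\|F\|_\infty e^{-\lambda t}\in L^1$).

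Properties (\textbf{P1}) and (\textbf{P2}) are immediate: for each $[\varrho_0,\textbf{m}_0,E_0]\in D$ the set $\mathcal{U}[\varrho_0,\textbf{m}_0,E_0]$ is non-empty and compact in $Q$, so $I_{\lambda,F}$ attains its minimum value $v$ there, and the selection set is the intersection of $\mathcal{U}[\varrho_0,\textbf{m}_0,E_0]$ with the closed level set $\{I_{\lambda,F}\le v\}$, hence compact and non-empty.

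For (\textbf{P4}) and (\textbf{P5}), the key observation is the splitting identity
\[
I_{\lambda,F}[q]=\int_0^{T} e^{-\lambda t}F(q(t))\,dt + e^{-\lambda T}\,I_{\lambda,F}[S_T\circ q],
\]
together with the analogous decomposition for a concatenation $q_1\cup_T q_2$. For (\textbf{P4}), if $[\varrho,\textbf{m},E]\in I_{\lambda,F}\circ\mathcal{U}[\varrho_0,\textbf{m}_0,E_0]$ but $S_T\circ[\varrho,\textbf{m},E]$ failed to minimize $I_{\lambda,F}$ over $\mathcal{U}[\varrho(T),\textbf{m}(T),E(T-)]$, any strictly cheaper competitor could be glued at time $T$ via (\textbf{P5}) of $\mathcal{U}$ to produce a dissipative weak solution from $[\varrho_0,\textbf{m}_0,E_0]$ of strictly smaller $I_{\lambda,F}$-value, contradicting minimality. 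Property (\textbf{P5}) is handled analogously, now combining the splitting identity with the minimality of each of the two pieces of the concatenation.

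Property (\textbf{P3}) is where I expect the real work. I intend to apply the Stroock--Varadhan measurability lemma to $I_{\lambda,F}\circ\mathcal{U}$: given $[\varrho_{0,n},\textbf{m}_{0,n},E_{0,n}]\to[\varrho_0,\textbf{m}_0,E_0]$ in $D$ and minimizers $x_n\in I_{\lambda,F}\circ\mathcal{U}[\varrho_{0,n},\textbf{m}_{0,n},E_{0,n}]$, the sequential stability of $\mathcal{U}$ (Proposition \ref{sequential stability}) produces a subsequence converging in $Q$ to some $x\in\mathcal{U}[\varrho_0,\textbf{m}_0,E_0]$, and continuity of $I_{\lambda,F}$ gives $I_{\lambda,F}[x_n]\to I_{\lambda,F}[x]$. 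The delicate step is to argue that this limit is still a minimizer, i.e.\ that for every $\tilde x\in\mathcal{U}[\varrho_0,\textbf{m}_0,E_0]$ one can produce approximants $\tilde x_n\in\mathcal{U}[\varrho_{0,n},\textbf{m}_{0,n},E_{0,n}]$ satisfying $\limsup_n I_{\lambda,F}[\tilde x_n]\le I_{\lambda,F}[\tilde x]$; this is a lower-semi-continuity property of the multifunction $\mathcal{U}$ which, for an ill-posed system, is not automatic from (\textbf{P1})--(\textbf{P5}). I expect its justification---via the constructive side of the existence theorem (Proposition \ref{existence}) and the sequential stability, or via an abstract selection argument based on lower semi-continuity of the value function---to form the main technical content of the proof.
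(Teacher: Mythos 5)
Your handling of (\textbf{P1}), (\textbf{P2}), (\textbf{P4}) and (\textbf{P5}) is correct and essentially identical to the paper's: existence and compactness of the set of minimizers follow from continuity of $I_{\lambda,F}$ on the compact non-empty set $\mathcal{U}[\varrho_0,\textbf{m}_0,E_0]$, and the shift and continuation properties follow from the splitting identity combined with gluing (the paper runs the same comparison directly rather than by contradiction, invoking (\textbf{P5}) of $\mathcal{U}$ to prove the shift invariance and (\textbf{P4}) of $\mathcal{U}$ to prove the continuation).

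The genuine gap is in (\textbf{P3}), precisely at the point you flag. Applying the Stroock--Varadhan limit-point criterion directly to the composed map $I_{\lambda,F}\circ\mathcal{U}$ forces you to show that a limit of minimizers over $\mathcal{U}[\varrho_{0,n},\textbf{m}_{0,n},E_{0,n}]$ still minimizes over $\mathcal{U}[\varrho_0,\textbf{m}_0,E_0]$, which amounts to $\limsup_n \min_{\mathcal{U}[y_n]} I_{\lambda,F}\leq \min_{\mathcal{U}[y]} I_{\lambda,F}$, i.e.\ to a lower semicontinuity (in the Kuratowski sense) of the data-to-solution-set map: every solution of the limit problem must be approximable by solutions of the perturbed problems. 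Proposition \ref{sequential stability} provides only the opposite half (compactness/upper semicontinuity), and for an ill-posed system neither the constructive existence proof nor the abstract properties (\textbf{P1})--(\textbf{P5}) yield the lower half; so as written your proof of (\textbf{P3}) is incomplete, and the route you sketch to close it is not expected to work.

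The paper's argument avoids this issue entirely by separating the two sources of dependence. Since $I_{\lambda,F}\circ\mathcal{U}[\varrho_0,\textbf{m}_0,E_0]$ is compact for every datum, Borel measurability is equivalent to measurability into the space of compact subsets of $Q$ with the Hausdorff metric $d_H$, and one factors $I_{\lambda,F}\circ\mathcal{U}=\mathcal{I}_{\lambda,F}\circ\mathcal{U}$, where $\mathcal{I}_{\lambda,F}[K]=\{z\in K:\ I_{\lambda,F}(z)=\min_K I_{\lambda,F}\}$ acts on compact sets $K\subset Q$. The data enter only through $\mathcal{U}$, which is Borel by hypothesis (\textbf{P3}); what remains is a property of the argmin operation alone, namely that $K\mapsto \mathcal{I}_{\lambda,F}[K]$ is Borel on $(\mathrm{Comp}(Q),d_H)$ (the paper argues continuity, using $\min_{K_n}I_{\lambda,F}\to\min_K I_{\lambda,F}$ when $K_n\to K$; measurability of this map is in any case classical, cf.\ Stroock--Varadhan). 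In particular no continuity of $\mathcal{U}$ in the data, and no lower semicontinuity of the solution set, is needed; you should replace your (\textbf{P3}) argument by this composition argument.
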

	
	\begin{proof}
		\begin{itemize}
			
			\item[(\textbf{P1})] As already pointed out, $I_{\lambda, F} \circ \mathcal{U}[\varrho_0, \textbf{m}_0, E_0]\neq \emptyset$ since $I_{\lambda, F}$ is continuous on Q and the set $\mathcal{U}[\varrho_0, \textbf{m}_0, E_0]$ is a non--empty compact subset of $Q$.
			
			\item[(\textbf{P2})] Since $I_{\lambda,F}: \mathcal{U}[\varrho_0, \textbf{m}_0, E_0] \rightarrow \mathbb{R}$ is continuous and since the set of minima of a continuous function is closed (it is the counterpart of a point), we get that $I_{\lambda, F} \circ \mathcal{U}[\varrho_0, \textbf{m}_0, E_0]\subseteq \mathcal{U}[\varrho_0, \textbf{m}_0, E_0]$ is closed in a compact set and hence compact itself.
			
			\item[(\textbf{P3})] Notice that, since $I_{\lambda,F} \circ \mathcal{U}[\varrho_0, \textbf{m}_0, E_0]$ is a compact subset of the separable metric space $Q$ for any $[\varrho_0, \textbf{m}_0, E_0]\in D$, the Borel measurability of the multivalued mapping 
			\begin{equation*}
				[\varrho_0, \textbf{m}_0, E_0]  \in D \mapsto I_{\lambda, F} \circ \mathcal{U}[\varrho_0, \textbf{m}_0, E_0] \in \mathcal{K} \subset 2^Q
			\end{equation*}
			corresponds to measurability with respect to the Hausdorff metric on the space of all compact subsets of Q.
			
			In other words, let $d_H$ be the Hausdorff metric on the subspace $\mathcal{K} \subset 2^Q$ of all the compact subsets of $Q$:
			\begin{equation*}
				d_H(K_1,K_2) = \inf_{\varepsilon \geq 0} \{ K_1 \subset V_{\varepsilon}(K_2) \mbox{ and } K_2 \subset V_{\varepsilon}(K_1) \} \quad \mbox{for all }K_1,K_2 \in \mathcal{K},
			\end{equation*}
			where $V_{\varepsilon}(A)$ is the $\varepsilon$-neighborhood of the set $A$ in the topology of $Q$; then, it is enough to show that the mapping defined for all $K \in \mathcal{K}$ as
			\begin{equation*}
				\mathcal{I}_{\lambda, F}[K] = \{ z\in K | I_{\lambda, F}(z) \leq I_{\lambda, F}(\tilde{z}) \mbox{ for all }\tilde{z}\in K \} = \left\{ z\in K | \min_{z\in K} I_{\lambda, F}(z) \right\}
			\end{equation*}
			is continuous as a mapping on $\mathcal{K}$ endowed with the Hausdorff metric $d_H$. In particular we want to show that if $K_n \overset{d_H}{\longrightarrow} K$ with $K_n,K \in \mathcal{K}$ then $\mathcal{I}_{\lambda, F}[K_n] \overset{d_H}{\longrightarrow} \mathcal{I}_{\lambda, F}[K]$ for $n \rightarrow \infty$. More precisely, it is enough to show that for every $\varepsilon >0$ there exists $n_0=n_0(\varepsilon)$ such that
			\begin{equation} \label{double inclusion}
			\mathcal{I}_{\lambda, F}[K_n] \subset V_{\varepsilon}(\mathcal{I}_{\lambda, F}[K]) \quad \mbox{and} \quad \mathcal{I}_{\lambda, F}[K] \subset V_{\varepsilon}(\mathcal{I}_{\lambda, F}[K_n])
			\end{equation}
			for all $n\geq n_0$. First of all, notice that by the continuity of $I_{\lambda, F}$ we have
			\begin{equation} \label{contradiction}
			\min_{K_n} I_{\lambda, F} \rightarrow \min_{K} I_{\lambda, F} \quad \mbox{for } n\rightarrow \infty.
			\end{equation}
			We start proving the first inclusion of \eqref{double inclusion}. By contradiction, suppose that exists a sequence $\{z_n\}_{n\in \mathbb{N}}$ such that
			\begin{equation*}
			z_n \in K_n, \quad I_{\lambda, F}(z_n) = \min_{K_n} I_{\lambda, F}, \quad z_n \rightarrow z\in K \setminus V_{\varepsilon} (\mathcal{I}_{\lambda, F}[K]); 
			\end{equation*}
			in particular, $I_{\lambda, F}(z) > \min_{K} I_{\lambda, F}$. By the continuity of $I_{\lambda, F}$ we have
			\begin{equation*}
			\min_{K_n} I_{\lambda, F} = I_{\lambda,F} (z_n) \rightarrow I_{\lambda, F}(z) > \min_{K} I_{\lambda,F} \quad \mbox{for } n\rightarrow \infty; 
			\end{equation*}
			but this contradicts \eqref{contradiction}. Interchanging the roles of $K_n$ and $K$ we get the opposite inclusion in \eqref{double inclusion}. We get the claim.
			
			\item[(\textbf{P4})] We want to prove the shift invariance: for every $[\varrho_0, \textbf{m}_0, E_0] \in D$ and $[\varrho,\textbf{m}, E] \in I_{\lambda,F} \circ \mathcal{U}[\varrho_0, \textbf{m}_0, E_0]$,
			\begin{equation*}
				S_T \circ [\varrho,\textbf{m}, E] \in I_{\lambda, F} \circ\mathcal{U}[\varrho(T), \textbf{m}(T), E(T-)] \quad \mbox{for any } T>0.
			\end{equation*}
			Let $[\varrho^T, \textbf{m}^T, E^T] \in I_{\lambda, F} \circ\mathcal{U}[\varrho(T), \textbf{m}(T), E(T-)]$; then, since in particular
			\begin{align*}
				[\varrho, \textbf{m}, E] &\in  \mathcal{U}[\varrho_0, \textbf{m}_0, E_0], \\
				[\varrho^T, \textbf{m}^T, E^T] &\in \mathcal{U}[\varrho(T), \textbf{m}(T), E(T-)],
			\end{align*}
			and since $\mathcal{U}$ satisfies property (\textbf{A4}), we get
			\begin{equation*}
				[\varrho,\textbf{m}, E] \cup_T [\varrho^T,  \textbf{m}^T, E^T] \in \mathcal{U}[\varrho_0,  \textbf{m}_0, E_0].
			\end{equation*}
			From the choice of $[\varrho,\textbf{m}, E]$, which minimize $I_{\lambda, F}$ on $\mathcal{U}[\varrho_0,  \textbf{m}_0, E_0]$, we obtain
			\begin{equation} \label{condition}
				I_{\lambda, F} [\varrho,\textbf{m}, E] \leq I_{\lambda, F} ([\varrho,\textbf{m},E]\cup_T [\varrho^T,  \textbf{m}^T, E^T]).
			\end{equation}
			Hence, using \eqref{condition} in the fifth line and the definition of $\cup_T$ in the sixth line
			\begin{align*}
				I_{\lambda, F} (S_T \circ [\varrho, \textbf{m}, E]) &= \int_{0}^{\infty} e^{-\lambda t} F(S_T \circ [\varrho, \textbf{m}, E](t)) dt \\
				&= \int_{0}^{\infty} e^{-\lambda t} F([\varrho, \textbf{m}, E](t+T)) dt \\
				&= e^{\lambda T} \int_{T}^{\infty} e^{-\lambda s} F([\varrho, \textbf{m}, E](s)) ds \\
				&= e^{\lambda T} \left( I_{\lambda, F}[\varrho,  \textbf{m}, E] - \int_{0}^{T} e^{-\lambda s} F([\varrho, \textbf{m}, E](s)) ds\right) \\
				& \leq e^{\lambda T} \left(I_{\lambda, F} ([\varrho, \textbf{m},E]\cup_T [\varrho^T,  \textbf{m}^T, E^T]) - \int_{0}^{T}e^{-\lambda s} F([\varrho, \textbf{m}, E](s)) ds  \right) \\
				& = e^{\lambda T} \int_{T}^{\infty} e^{-\lambda s}F([\varrho^T, \textbf{m}^T, E^T](s-T)) ds \\
				& = e^{\lambda T} \int_{0}^{\infty} e^{-\lambda (t+T)} F([\varrho^T, \textbf{m}^T, E^T](t)) dt \\
				& = I_{\lambda, F} [\varrho^T,\textbf{m}^T, E^T].
			\end{align*}
			This implies that $S_T \circ [\varrho, \textbf{m}, E]$ minimizes $I_{\lambda,F}$ and consequently  belongs to \\ $I_{\lambda, F} \circ\mathcal{U}[\varrho(T), \textbf{m}(T), E(T-)]$ for any $T>0$. 
			
			\item[(\textbf{P5})] We want to prove the continuation: if $T>0$ and $[\varrho^1,\textbf{m}^1,E^1]\in I_{\lambda,F}\circ \mathcal{U}[\varrho_0,\textbf{m}_0, E_0]$, $[\varrho^2,\textbf{m}^2,E^2]\in I_{\lambda, F}\circ\mathcal{U}[\varrho^1(T),\textbf{m}^1(T), E^1(T-)]$, then
			\begin{equation*}
				[\varrho^1, \textbf{m}^1, E^1] \cup_T [\varrho^2,\textbf{m}^2,E^2] \in I_{\lambda, F} \circ \mathcal{U}[\varrho_0,\textbf{m}_0, E_0].
			\end{equation*}
			Using the shift invariance for $\mathcal{U}$ we obtain
			\begin{equation*}
				S_T \circ [\varrho^1,\textbf{m}^1, E^1] \in \mathcal{U}[\varrho^1(T),\textbf{m}^1(T), E^1(T-)];
			\end{equation*}
			since $[\varrho^2,\textbf{m}^2, E^2]$ is a minimum of $I_{\lambda,F}$ on $\mathcal{U}[\varrho^1(T),\textbf{m}^1(T), E^1(T-)]$ we get 
			\begin{equation} \label{condition2}
				I_{\lambda, F} [\varrho^2,\textbf{m}^2, E^2] \leq I_{\lambda,F} (S_T \circ [\varrho^1,\textbf{m}^1, E^1]).
			\end{equation}
			Hence, using \eqref{condition2} in the fourth line
			\begin{align*}
				I_{\lambda,F} &([\varrho^1, \textbf{m}^1, E^1] \cup_T [\varrho^2,\textbf{m}^2,E^2] ) \\
				&= \int_{0}^{T} e^{-\lambda t} F([\varrho^1,\textbf{m}^1, E^1](t)) dt + \int_{T}^{\infty} e^{-\lambda t} F([\varrho^2,\textbf{m}^2, E^2] (t-T)) dt \\
				&= \int_{0}^{T} e^{-\lambda t} F([\varrho^1,\textbf{m}^1, E^1](t)) dt + e^{-\lambda T} \int_{0}^{\infty} e^{-\lambda s} F([\varrho^2,\textbf{m}^2, E^2] (s)) ds \\
				&= \int_{0}^{T} e^{-\lambda t} F([\varrho^1,\textbf{m}^1, E^1](t)) dt + e^{-\lambda T} I_{\lambda,F} [\varrho^2,\textbf{m}^2, E^2] \\
				&\leq \int_{0}^{T} e^{-\lambda t} F([\varrho^1,\textbf{m}^1, E^1](t)) dt + e^{-\lambda T} I_{\lambda,F} (S_T \circ [\varrho^1,\textbf{m}^1, E^1]) \\
				&= \int_{0}^{T} e^{-\lambda t} F([\varrho^1,\textbf{m}^1, E^1](t)) dt + e^{-\lambda T} \int_{0}^{\infty} e^{-\lambda s} F ([\varrho^1,\textbf{m}^1, E^1](s+T)) ds \\
				&= \int_{0}^{T} e^{-\lambda t} F([\varrho^1,\textbf{m}^1, E^1](t)) dt + \int_{T}^{\infty} e^{-\lambda t} F([\varrho^1, \textbf{m}^1, E^1](t)) dt \\
				&= I_{\lambda,F} [\varrho^1,\textbf{m}^1, E^1].
			\end{align*}
			Using the continuation property for $\mathcal{U}$, we have that 
			\begin{equation*}
				[\varrho^1, \textbf{m}^1, E^1] \cup_T [\varrho^2,\textbf{m}^2,E^2]\in \mathcal{U}[\varrho_0,\textbf{m}_0,E_0],
			\end{equation*} 
			and since $[\varrho^1,\textbf{m}^1, E^1]$ is a minimum of $I_{\lambda, F}$, we must have 
			\begin{equation*}
				I_{\lambda,F} ([\varrho^1, \textbf{m}^1, E^1] \cup_T [\varrho^2,\textbf{m}^2,E^2] )= I_{\lambda,F} [\varrho^1,\textbf{m}^1, E^1];
			\end{equation*}
			thus $[\varrho^1, \textbf{m}^1, E^1] \cup_T [\varrho^2,\textbf{m}^2,E^2] \in I_{\lambda, F} \circ \mathcal{U}[\varrho_0, \textbf{m}_0, E_0]$.
		\end{itemize}
	\end{proof}
	
	\subsection{Selection sequence} \label{selection sequence}
	
	In this section we will prove the existence of the semiflow selection for the compressible Navier--Stokes system. We will need the following topological result, which is a variation of the Cantor's intersection theorem.
	
	\begin{theorem} \label{Cantor intersection theorem}
		Let $S$ be a Hausdorff space. A decreasing nested sequence of non-empty compact subsets of $S$ is non-empty. In other words, supposing $\{ C_k \}_{k\in \mathbb{N}}$ is a sequence of non-empty compact subsets of $S$ satisfying 
		\begin{equation*}
		C_0 \supseteq C_1 \supseteq C_2 \supseteq \dots \supseteq C_k \supseteq \dots
		\end{equation*}
		it follows that
		\begin{equation*}
		\bigcap_{k\in \mathbb{N}} C_k \neq \emptyset.
		\end{equation*}
	\end{theorem}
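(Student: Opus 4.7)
The plan is to argue by contradiction using the defining property of compactness (every open cover admits a finite subcover), in the style of the classical Cantor intersection theorem. I would work inside the ambient compact set $C_0$, which we may regard as a compact Hausdorff space in its own right under the subspace topology, so that each $C_k \subseteq C_0$ remains compact.

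Assume toward contradiction that $\bigcap_{k\in\mathbb{N}} C_k = \emptyset$. For each $k$ set $U_k = C_0 \setminus C_k$. Since $S$ is Hausdorff, the compact set $C_k$ is closed in $S$, and hence also in $C_0$, so each $U_k$ is open in $C_0$. Passing to complements in the assumed empty intersection yields $C_0 = \bigcup_{k\in\mathbb{N}} U_k$, so $\{U_k\}_{k\in\mathbb{N}}$ is an open cover of the compact space $C_0$. Extracting a finite subcover $U_{k_1}, \dots, U_{k_n}$ and setting $K = \max\{k_1,\dots,k_n\}$, the nesting $C_0 \supseteq C_1 \supseteq \cdots$ gives the reverse nesting $U_0 \subseteq U_1 \subseteq \cdots$ of the complements, so $U_{k_1}\cup\cdots\cup U_{k_n} = U_K$. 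Hence $C_0 = U_K = C_0 \setminus C_K$, which forces $C_K = \emptyset$, contradicting the assumption that every $C_k$ is non-empty.

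The argument is entirely standard and presents no genuine obstacle; the Hausdorff hypothesis is used only to ensure that compact subsets are closed (so that their complements in $C_0$ are open), while the nested structure is precisely what collapses an arbitrary finite subcover to a single $U_K$. In the application to the selection procedure this theorem will be invoked with $S = Q$ and $C_k = I_{\lambda_k, F_k} \circ \cdots \circ I_{\lambda_1, F_1}\circ \mathcal{U}[\varrho_0, \textbf{m}_0, E_0]$, where compactness of each $C_k$ is guaranteed by Proposition \ref{properties}(\textbf{P2}) applied iteratively, and non-emptiness by (\textbf{P1}).
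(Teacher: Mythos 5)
Your proof is correct and follows essentially the same route as the paper: a contradiction argument in which the complements $U_k = C_0\setminus C_k$ form an open cover of the compact set $C_0$, a finite subcover collapses to a single $U_K$ by nestedness, and this forces $C_K=\emptyset$. If anything, you make explicit the one point the paper leaves implicit, namely that the Hausdorff hypothesis is what guarantees each $C_k$ is closed so that $U_k$ is open.
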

	\begin{proof}
		By contradiction, assume $\bigcap_{k\in \mathbb{N}} C_k = \emptyset$. For each $n$, let $U_n=C_0 \setminus C_n$; since
		\begin{equation*}
		\bigcup_{n\in \mathbb{N}} U_n = \bigcup_{n\in \mathbb{N}} (C_0\setminus C_n) = C_0 \setminus \left( \bigcap_{n\in \mathbb{N}} U_n\right)
		\end{equation*}
		and $\bigcap_{n\in \mathbb{N}} C_n = \emptyset$, we obtain $\bigcup_{n\in \mathbb{N}}U_n = C_0$. Since $C_0 \subset S$ is compact and $\{ U_n \}_{n\in \mathbb{N}}$ is an open cover (on $C_0$) of $C_0$, we can extract a finite cover $\{ U_{n_1}, \dots, U_{n_m} \}$. Let $U_k$ be the largest set of this cover ($C_k$ the correspondent smallest set), which exists by the ordering hypothesis on the collection $\{ C_n \}_{n\in \mathbb{N}}$. Then 
		\begin{equation*}
		C_0 \subset \bigcup_{j=1}^{m} U_{n_j} = \bigcup_{j=1}^{m} (C_0 \setminus C_{n_j}) = C_0 \setminus \bigcap_{j=1}^{m} C_{n_j}= C_0 \setminus C_k = U_k.
		\end{equation*}
		Then $C_k = C_0 \setminus U_k= \emptyset$, a contradiction since every set of the sequence $\{ C_n \}_{n\in \mathbb{N}}$ is non-empty by hypothesis.
	\end{proof}
	
	We are now ready to prove Theorem \ref{main result}.
	\begin{proof} [Proof of Theorem \ref*{main result}]
		First of all, we will select only those solutions that are admissible, meaning minimal with respect to the relation $\prec$ introduced in Definition \ref{admissible solution}. To this end, it is sufficient to consider the functional $I_{\lambda, \alpha}$ with $\alpha(\varrho,\textbf{m}, E)=\alpha(E)$,
		\begin{equation} \label{conditions on alpha}
			\alpha: \mathbb{R}\rightarrow \mathbb{R} \mbox{ smooth, bounded, and strictly increasing}.
		\end{equation}
		Indeed, if $[\varrho, \textbf{m}, E]\in I_{\lambda, \alpha} \circ \mathcal{U}[\varrho_0, \textbf{m}_0, E_0]$ then
		\begin{equation} \label{condition 3}
			\int_{0}^{\infty} e^{-t} \alpha(E(t)) dt \leq \int_{0}^{\infty} e^{-t} \alpha(\tilde{E}(t)) dt
		\end{equation}
		for any $[\tilde{\varrho}, \tilde{ \textbf{m}}, \tilde{E}]\in \mathcal{U}[\varrho_0, \textbf{m}_0, E_0]$. Now, proceeding by contradiction, suppose that $[\tilde{\varrho}, \tilde{\textbf{m}}, \tilde{E}]\in \mathcal{U}[\varrho_0,  \textbf{m}_0, E_0]$ is such that $[\tilde{\varrho}, \tilde{ \textbf{m}}, \tilde{E}] \prec [\varrho, \textbf{m}, E]$, that is, $\tilde{E} \leq E$ in $[0,\infty)$. Then, since $\alpha$ is strictly increasing, $\alpha(\tilde{E}(t))\leq \alpha(E(t))$ for every $t\in [0,\infty)$, which implies that $e^{-t}[\alpha(E(t))- \alpha(\tilde{E}(t))]\geq0$. Using the monotonicity of the integral, we obtain
		\begin{equation*}
		\int_{0}^{\infty} e^{-t} [\alpha(E(t))- \alpha(\tilde{E}(t))] dt \geq 0;
		\end{equation*}
		on the other side, condition \eqref{condition 3} tells us that
		\begin{equation*}
		\int_{0}^{\infty} e^{-t} [\alpha(E(t))- \alpha(\tilde{E}(t))] dt \leq 0.
		\end{equation*}
		The only possibility is to have the equality in both the integral relations above and thus $\alpha(E(t)) = \alpha(\tilde{E}(t))$ for a.e. $t\in (0,\infty)$; since $\alpha$ is strictly increasing, this implies $E=\tilde{E}$ a.e. in $(0,\infty)$. 
		
		Next, we choose a countable basis $\{ \textbf{e}_n \}_{n\in \mathbb{N}}$ in $L^2(\Omega; \mathbb{R}^N)$, and a countable set $\{ \lambda_k \}_{k\in \mathbb{N}}$ which is dense in $(0,\infty)$. We consider a countable family of functionals,
		\begin{align*}
			I_{k,0}[\varrho, \textbf{m}, E] &= \int_{0}^{\infty} e^{-\lambda_kt} \alpha(E(t)) dt, \\
			I_{k,n}[\varrho, \textbf{m}, E]  &= \int_{0}^{\infty} e^{-\lambda_kt} \alpha\left( \int_{\Omega} \textbf{m}(t,\cdot) \cdot \textbf{e}_n dx \right) dt,
		\end{align*}
		where again $\alpha$ satisfies condition \eqref{conditions on alpha}; the functionals are well defined since $ \textbf{m}(t,\cdot)\in W^{-\ell, 2}(\Omega; \mathbb{R}^N)$ for all $t$. Let $\{ (k(j),n(j)) \}_{j=1}^{\infty}$ be an enumeration of all the involved combinations of indices, that is, an enumeration of the countable set
		\begin{equation*}
		(\mathbb{N} \times \{ 0 \}) \cup (\mathbb{N} \times \mathbb{N}).
		\end{equation*}
		We define
		\begin{equation*}
			\mathcal{U}^j = I_{k(j),n(j)} \circ \dots \circ I_{k(1),n(1)} \circ I_{1,\alpha} \circ \mathcal{U}, \quad j=1,2, \dots,
		\end{equation*}
		and 
		\begin{equation*}
			\mathcal{U}^{\infty} = \bigcap_{j=1}^{\infty} \mathcal{U}^j.
		\end{equation*}
		The set-valued mapping
		\begin{equation*}
		[\varrho_0, \textbf{m}_0, E_0] \in D \mapsto \mathcal{U}^{\infty} [\varrho_0,  \textbf{m}_0, E_0] 
		\end{equation*}
		enjoys the properties (\textbf{P1})--(\textbf{P5}). Indeed:
		\begin{enumerate}
			
			\item[(\textbf{P1})--(\textbf{P2})] first, notice that for every fixed initial data $[\varrho_0, \textbf{m}_0, E_0]\in D$ the sets $\mathcal{U}^j[\varrho_0, \textbf{m}_0, E_0]$ are nested:
			\begin{equation*}
				I_{1,\alpha} \circ \mathcal{U}[\varrho_0, \textbf{m}_0, E_0] \supseteq \mathcal{U}^1[\varrho_0, \textbf{m}_0, E_0] \supseteq \dots \supseteq \mathcal{U}^j[\varrho_0, \textbf{m}_0, E_0] \supseteq \dots.
			\end{equation*}
			By Proposition \ref{properties} we can deduce that $I_{1, \alpha} \circ \mathcal{U}[\varrho_0, \textbf{m}_0, E_0]$ is compact, and iterating this procedure we obtain that all $\mathcal{U}^j[\varrho_0, \textbf{m}_0, E_0]$ are compact. Since $Q$ is a Hausdorff space, every compact set is also closed and a countable intersection of closed set is closed. Since $\mathcal{U}^{\infty}[\varrho_0, \textbf{m}_0, E_0] \subseteq I_{1, \alpha} \circ \mathcal{U}[\varrho_0, \textbf{m}_0, E_0]$, which is compact, we obtain that $\mathcal{U}^{\infty}[\varrho_0, \textbf{m}_0, E_0]$ is compact. By Proposition \ref{properties} we can also deduce that every $\mathcal{U}^j[\varrho_0, \textbf{m}_0, E_0]$ is non-empty; applying Theorem \ref{Cantor intersection theorem} we then get that $\mathcal{U}^{\infty}[\varrho_0, \textbf{m}_0, E_0] \neq \emptyset$;
			
			\item[(\textbf{P3})]as it is an intersection set--valued map obtained from measurable set--valued maps, it is also measurable;
			
			\item[(\textbf{P4})] to prove the shift property, let $[\varrho_0, \textbf{m}_0, E_0] \in D$ and $[\varrho, \textbf{m}, E]\in \mathcal{U}^{\infty}[\varrho_0, \textbf{m}_0, E_0]$; then, in particular $[\varrho, \textbf{m}, E]\in \mathcal{U}^j[\varrho_0, \textbf{m}_0, E_0]$ for every $j$. By Proposition \ref{properties}, we can deduce that $I_{1, \alpha} \circ \mathcal{U}$ satisfies the shift invariance property, and iterating this procedure we obtain that this holds for every $\mathcal{U}^j$. This implies
			\begin{equation*}
				S_T \circ [\varrho, \textbf{m}, E] \in \mathcal{U}^j[\varrho(T), \textbf{m}(T), E(T-)], \mbox{ for all }j \mbox{ and all }T>0.
			\end{equation*}
			Thus
			\begin{equation*}
				S_T \circ [\varrho, \textbf{m}, E] \in \mathcal{U}^{\infty}[\varrho(T), \textbf{m}(T), E(T-)], \mbox{ for all }T>0;
			\end{equation*}
			
			\item[(\textbf{P5})] to prove the continuation property, let $T>0$, $[\varrho^1,\textbf{m}^1,E^1]\in  \mathcal{U}^{\infty}[\varrho_0,\textbf{m}_0, E_0]$ and  $[\varrho^2,\textbf{m}^2,E^2]\in \mathcal{U}^{\infty}[\varrho^1(T), \textbf{m}^1(T), E^1(T-)]$; then, in particular we have $[\varrho^1,\textbf{m}^1,E^1]\in  \mathcal{U}^j[\varrho_0, \textbf{m}_0, E_0]$ and  $[\varrho^2, \textbf{m}^2,E^2]\in \mathcal{U}^j[\varrho^1(T),\textbf{m}^1(T), E^1(T-)]$ for every $j$. By  Proposition \ref{properties}, we can deduce that $I_{1, \alpha} \circ \mathcal{U}$ satisfies the continuation property, and iterating this procedure we obtain that this holds for every $\mathcal{U}^j$. This implies
			\begin{equation*}
			[\varrho^1, \textbf{m}^1, E^1] \cup_T [\varrho^2, \textbf{m}^2,E^2] \in \mathcal{U}^j[\varrho_0, \textbf{m}_0, E_0] \mbox{ for all }j \mbox{ and all }T>0.
			\end{equation*}
			Thus
			\begin{equation*}
			[\varrho^1, \textbf{m}^1, E^1] \cup_T [\varrho^2, \textbf{m}^2,E^2] \in \mathcal{U}^{\infty}[\varrho_0,\textbf{m}_0, E_0] \mbox{ for all }T>0.
			\end{equation*}
		\end{enumerate}
		We claim that for every $[\varrho_0,  \textbf{m}_0, E_0] \in D$ the set $\mathcal{U}^{\infty}$ is a singleton, meaning
		\begin{equation*}
		\mathcal{U}^{\infty}[\varrho_0, \textbf{m}_0, E_0]= U\{ \varrho_0, \textbf{m}_0, E_0 \} \in Q. 
		\end{equation*}
		To verify this, we observe that
		\begin{equation*}
			I_{k(j),n(j)} [\varrho^1,\textbf{m}^1, E^1] = I_{k(j),n(j)} [\varrho^2,\textbf{m}^2, E^2]
		\end{equation*}
		for any $[\varrho^1, \textbf{m}^1, E^1], [\varrho^2,  \textbf{m}^2, E^2]\in \mathcal{U}^{\infty}[\varrho_0, \textbf{m}_0, E_0]$ for all $j=1,2, \dots$; from the choice of $\{ k(j), n(j) \}_{j\in \mathbb{N}}$, we can see the integrals $I_{k(j), n(j)}$ as Laplace transforms 
		\begin{equation*}
			F(\lambda_k)= \int_{0}^{\infty} e^{-\lambda_k t}f(t)dt
		\end{equation*}
		of the functions
		\begin{equation*}
			f\in \left\{ \alpha(E), \  \alpha\left(\int_{\Omega}  \textbf{m}\cdot \textbf{e}_n dx \right)\right\}.
		\end{equation*}
		We can apply Lerch's theorem: if a function $F$ has the inverse Laplace transform $f$, then $f$ is uniquely determined (considering functions which differ from each other only on a point set having Lebesgue measure zero as the same). Then we get that
		\begin{align*}
		\alpha(E^1(t)) &= \alpha(E^2(t)), \\
		\alpha\left(\int_{\Omega} \textbf{m}^1(t,\cdot)\cdot \textbf{e}_n dx \right) &= \alpha\left(\int_{\Omega} \textbf{m}^2(t,\cdot)\cdot \textbf{e}_n dx \right),
		\end{align*}
		for all $n \in \mathbb{N}$ and for a.e. $t\in (0,\infty)$. As $\alpha$ is strictly increasing we must in particular have
		\begin{equation*}
			E^1(t)= E^2(t), \quad \langle \textbf{m}^1(t,\cdot); \textbf{e}_n \rangle_{L^2(\Omega; \mathbb{R}^N)}= \langle \textbf{m}^2(t,\cdot); \textbf{e}_n \rangle_{L^2(\Omega; \mathbb{R}^N)},
		\end{equation*}
		for all $n \in \mathbb{N}$ and for a.e. $t\in (0,\infty)$. Since $\{ \textbf{e}_n \}_{n\in \mathbb{N}}$ form a basis in $L^2(\Omega; \mathbb{R}^N)$ we conclude
		\begin{equation*}
			\textbf{m}^1=\textbf{m}^2, \mbox{ and } E^1=E^2 \mbox{ a.e. on }(0,\infty).
		\end{equation*}
		From the continuity equation \eqref{continuity equation} and from the fact that $\varrho^1(0,\cdot)= \varrho^2(0,\cdot)$ it is easy to see that
		\begin{equation*}
			\varrho^1= \varrho^2 \quad \mbox{a.e. on } (0,\infty).
		\end{equation*}
		It remains to prove that $U$ is a semiflow selection: measurability follows from (\textbf{P3}) while the semigroup property follows from (\textbf{P4}): for $t_1,t_2 \geq 0$ it holds 
		\begin{equation*}
		U\{ \varrho_0, \textbf{m}_0, E_0 \} (t_1+t_2)= S_{t_1} \circ U\{ \varrho_0, \textbf{m}_0, E_0 \} (t_2) = U\{ \varrho(t_1), \textbf{m}(t_1), E(t_1 -) \} (t_2).
		\end{equation*}
		This completes the proof.
	\end{proof}

	\section{Restriction to semigroup acting only on the initial data} \label{restriction}
	
	As a matter of fact, the semiflow selection $U=U\{\varrho_0,\textbf{m}_0, E_0\}$ is determined in terms of the \textit{three} state variables: the density $\varrho_0$, the momentum $\textbf{m}_0$, and the energy $E_0$. Introduction of the energy might be superfluous; indeed, as pointed out in \eqref{energy}
	\begin{equation*}
		E(\tau) = \int_{\Omega} \left[ \frac{1}{2}  \frac{|\textbf{m}|^2}{\varrho} +P(\varrho)\right] (\tau,\cdot) dx \quad \mbox{for a.e. } \tau\geq 0.
	\end{equation*}
	The point is that the equality holds with the exception of a zero measure set of times. More specifically, the energy $E(\tau)$ is a non-increasing function with well-defined right and left limits $E(\tau\pm)$, while
	\begin{equation*}
		\int_{\Omega} \left[ \frac{1}{2} \frac{|\textbf{m}|^2}{\varrho}+ P(\varrho)\right](\tau,\cdot) dx 
	\end{equation*}
	is defined at \textit{any} $\tau$ in terms of weakly continuous functions $t\mapsto \varrho(t,\cdot)$, $t\mapsto \textbf{m}(t,\cdot)$. Due to the convexity of the superposition 
	\begin{equation*}
		[\varrho, \textbf{m}] \mapsto \frac{1}{2} \frac{|\textbf{m}|^2}{\varrho} +P(\varrho)
	\end{equation*} 
	the function 
	\begin{equation*}
		\tau \mapsto \int_{\Omega} \left[ \frac{1}{2} \frac{|\textbf{m}|^2}{\varrho} +P(\varrho)\right] (\tau,\cdot) dx
	\end{equation*}
	is lower semi--continuous in $\tau$. In particular,
	\begin{equation*}
		\int_{\Omega} \left[ \frac{1}{2} \frac{|\textbf{m}|^2}{\varrho} +P(\varrho)\right] (\tau,\cdot) dx \leq E(\tau \pm) \quad \mbox{for any } \tau,
	\end{equation*}
	where equality holds with the exception of a set of time of measure zero. 
	
	We may introduce a new selection defined only in terms of the initial data $\varrho_0$, $\textbf{m}_0$; however, the price to pay is that the semigroup property will hold almost everywhere in time. More specifically, we can state this final result.
	
	\begin{theorem} \label{final}
		Let $U=U\{ \varrho_0, \textbf{m}_0, E_0 \}$ be the semiflow selection associated to the Navier--Stokes system in the sense of Definition \ref{semiflow selection}. Consider the set of initial data 
		\begin{equation*}
			\widetilde{D} = \left\{ [\varrho_0, \textbf{m}_0]: \  \left[ \varrho_0, \textbf{m}_0, \int_{\Omega} \left(\frac{1}{2} \frac{|\textbf{m}_0|^2}{\varrho_0} +P(\varrho_0) \right) dx \right] \in D\right\}.
		\end{equation*}
		Defining $V:\widetilde{D} \rightarrow Q$  such that 
		\begin{equation*}
				V\{ \varrho_0, \textbf{m}_0 \} (t) = U\left\{ \varrho_0, \textbf{m}_0,  \int_{\Omega} \left[ \frac{1}{2} \frac{|\textbf{m}_0|^2}{\varrho_0} +P(\varrho_0)\right] dx \right\} (t)
		\end{equation*}
		for all $t\in (0,\infty)$, then $V$ will satisfy the semigroup property only almost everywhere; more precisely, calling $\mathcal{T} \subset (0,\infty)$ the set of times defined as
		\begin{equation*}
			\mathcal{T} = \left\{ \tau \in (0,\infty):  \ E(\tau)= \int_{\Omega} \left[\frac{1}{2}\frac{|\textbf{m}|^2}{\varrho} +P(\varrho)\right](\tau,\cdot) dx \right\},
		\end{equation*}
		then $\mathcal{T}$ is a set of full measure and 
		\begin{equation*}
			V\{ \varrho_0, \textbf{m}_0 \} (t_1+t_2) = V\{ V\{\varrho_0, \textbf{m}_0\} (t_1) \} (t_2)
		\end{equation*}
		holds for all $t_1,t_2 \in \mathcal{T}$.
	\end{theorem}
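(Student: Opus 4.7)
The plan is to reduce the semigroup identity for $V$ to the one already established for $U$ in Theorem \ref{main result}, by matching the initial energy reconstructed from $[\varrho(t_1), \textbf{m}(t_1)]$ with the left limit $E(t_1-)$ used in the $U$--semigroup. I would proceed in three main steps.

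First, I would verify that $V$ is well defined: for $[\varrho_0, \textbf{m}_0]\in \widetilde{D}$ and every $t \geq 0$ the pair $[\varrho(t), \textbf{m}(t)]$ again lies in $\widetilde{D}$, which combines the regularity class $[\varrho,\textbf{m}]\in C_{\mathrm{weak,loc}}([0,\infty); L^{\gamma}\times L^{2\gamma/(\gamma+1)})$ with the lower--semicontinuous estimate
\[ \int_\Omega \left[\tfrac{1}{2}\tfrac{|\textbf{m}(t)|^2}{\varrho(t)}+ P(\varrho(t))\right]dx \leq E(t\pm) \leq \bar{E}_0, \]
where $\bar{E}_0 := \int_\Omega [\tfrac{1}{2}|\textbf{m}_0|^2/\varrho_0 + P(\varrho_0)]\, dx$ is the initial energy used by $V$. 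Next, to confirm that $\mathcal{T}$ has full measure I appeal to item (iv) of Definition \ref{dissipative solution}, which asserts the equality defining $\mathcal{T}$ for a.e.~$\tau$ (after rewriting $\varrho|\textbf{u}|^2 = |\textbf{m}|^2/\varrho$ with the vanishing convention on $\{\varrho=0\}$). Since $E\in BV_{\mathrm{loc}}$ its discontinuity set is at most countable, so without loss of measure I further restrict $\mathcal{T}$ to the continuity points of $E$; on this still full--measure set we have $E(\tau-) = E(\tau) = E(\tau+)$.

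Finally, setting $[\varrho, \textbf{m}, E] := U\{\varrho_0, \textbf{m}_0, \bar{E}_0\}$, so that $V\{\varrho_0, \textbf{m}_0\}(t) = [\varrho(t), \textbf{m}(t), E(t)]$, the $U$--semigroup of Definition \ref{semiflow selection} yields
\[ U\{\varrho_0, \textbf{m}_0, \bar{E}_0\}(t_1+t_2) = U\{\varrho(t_1), \textbf{m}(t_1), E(t_1-)\}(t_2) \]
for every $t_1, t_2 \geq 0$. For $t_1 \in \mathcal{T}$, the previous step gives
\[ E(t_1-) = E(t_1) = \int_\Omega\left[\tfrac{1}{2}\tfrac{|\textbf{m}(t_1)|^2}{\varrho(t_1)} + P(\varrho(t_1))\right]dx, \]
which is precisely the initial energy that $V$ feeds into $U$ when restarted from $[\varrho(t_1), \textbf{m}(t_1)]$. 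Hence the right--hand side above equals $V\{V\{\varrho_0, \textbf{m}_0\}(t_1)\}(t_2)$, and the claimed identity follows; the hypothesis $t_2\in\mathcal{T}$ then just guarantees that the energy component of the outcome is also represented by its integral form.

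The main obstacle to watch is the mismatch between $E(\tau)$, $E(\tau\pm)$ and the integral expression at exceptional times: because Definition \ref{semiflow selection} restarts $U$ with the \emph{left} limit, one must know that $E(t_1-)$ equals the integral, not merely that $E(t_1)$ does. This is why $\mathcal{T}$ must be pruned by the (countable) discontinuity set of $E$; once this is done, the argument becomes essentially bookkeeping between the two definitions and no further analytic estimates are required.
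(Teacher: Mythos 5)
Your argument is correct and is essentially the paper's intended reasoning: the paper gives no separate formal proof of Theorem \ref{final}, only the discussion preceding it (lower semicontinuity of the integral energy, its a.e.\ coincidence with $E$, and reduction to the semigroup property of $U$), which is exactly what you carry out. The one caveat is that, since $U$ restarts from the left limit $E(t_1-)$, your argument yields the identity on the full-measure subset of $\mathcal{T}$ consisting of continuity points of $E$ rather than literally on all of $\mathcal{T}$ as stated; this is consistent with the ``a.e.\ in time'' formulation \eqref{semigroup property for almost every time} announced in the introduction and is, if anything, a more careful rendering of the claim.
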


	\bigskip
	
	\centerline{\bf Acknowledgement}
	
	This work was supported by the Einstein Foundation, Berlin. The author wishes to thank Prof. Eduard Feireisl for the helpful advice and discussions.
	
	\newpage
	
\end{document}